\newcommand{\wt}{\widetilde}
\newcommand{\zp}{\mathbb{Z}/p\mathbb{Z}}
\newcommand{\zm}{\mathbb{Z}/m\mathbb{Z}}
\newcommand{\Card}{\operatorname{Card}}
\newcommand{\card}{\operatorname{card}}
\newcommand{\la}{\langle}
\newcommand{\ra}{\rangle}
\newcommand{\hp}{\hat{+}}
\newtheorem{thm}{Theorem}
\newtheorem*{mlem*}{Main Lemma}
\newtheorem*{thm*}{Theorem}
\newtheorem{prop}[thm]{Proposition}
\newcommand{\Z}{{\mathbb Z}}
\newcommand{\Q}{{\mathbb Q}}
\newcommand{\Prob}{\mathbb{P}}
\newtheorem{lem}[thm]{Lemma}
\newtheorem*{cor*}{Corollary}
\newtheorem*{conj*}{Conjecture} 
\newtheorem{cor}[thm]{Corollary}
\newtheorem{conj}[thm]{Conjecture}
\theoremstyle{definition}
\newtheorem{rem}[thm]{Remark}
\begin{document}
\title{Number of sets with small sumset and the clique number of random Cayley graphs}
\author{Gyan Prakash
%\\
%Harish-Chandra Research Institute,\\
%Chhatnag Road, Jhunsi,\\
%Allahabad-211019,\\
%India.\\
%e-mail:{ gyan@hri.res.in}
}
\date{}
\maketitle
\begin{abstract} Let $G$ be a finite abelian group of order $n$. For
any subset $B$ of $G$ with $B=-B$, the Cayley graph $G_B$ is a graph
on vertex set $G$ in which $ij$ is an edge if and only if $i-j\in B.$
It was shown by Ben  Green~\cite{Gclique} that when $G$ is a vector
space over a finite field $\zp$, then there is a Cayley graph
containing neither a complete subgraph nor an independent set of size
more than $c\log n\log\log n,$ where $c>0$ is an absolute constant. In
this article we observe that a modification of his arguments shows that
for an arbitrary finite abelian group of order $n$, there is a Cayley graph
containing neither a complete subgraph nor an independent set of size
more than $c\left(\omega^3(n)\log \omega(n) +\log n\log\log n\right)$,
where $c>0$ is an absolute constant and $\omega(n)$ denotes the number
of distinct prime divisors of~$n$.
\end{abstract} 
A graph $G =(V,E)$ consists of a finite nonempty set $V$ (vertex set) together with a prescribed set $E$ (edge set) of unordered pair of distinct elements of $V$. Each pair $x=\{u,v\} \in E$ is an edge of $G$ and $x$ is said to join $u$ and $v$ by an edge. The graph $G$ is complete if any two elements in $V$ are joined by an edge. A maximal complete subgraph of a graph is a {\em clique}
and the  {\em clique number} is the maximal order of a clique.
An independent set of a graph $G=(V,E)$ is a subset $V'$ of $V$ such
that no two points in $V'$ are connected by an edge. Given a graph $G
= (V,E)$ the complementary graph $G^c = (V',E')$ is a graph with vertex set $V'
=V$ and two elements of $V$ are joined by an edge in $G^c$ if and only if
they are not joined by an edge in $G.$ A set is an independent set in
$G$ if and only if it spans a complete subgraph in $G^c.$

\vspace{2mm}
\noindent
Ramsey proved that given any positive integer $k$,
there is a Ramsey number $R(k)$ such that any graph $G$ on $n$ vertices, with
$n \geq R(k)$, contains either a clique or an independent
set which has more than $k$ vertices. Erd\H os~\cite{Erdos-clique} showed that the Ramsey
number $R(k)$ has at least an exponential growth in $k$. 
Using a probabilistic argument, Erd\H os proved that  there exists a graph on $n$
vertices which neither contains a clique nor an independent
set of size more than $c\log n$ vertices with $c$ being a positive
absolute constant. An explicit construction of such a graph is not known. 
Chung~\cite{Chung} gave a construction of graphs on $n$ vertices which contains neither a complete subgraph nor an independent set on more than 
$e^{c(\log n)^{3/4}/(\log \log n)^{1/4}}$ vertices.

\vspace{2mm}
\noindent
Given a  finite abelian group $G$ of order $n$ and a set $B
\subset G$, with $B =-B$ and $0 \notin B$, the {\em Cayley graph} $G_B$ is a graph on
vertex set $G$ in which $ij$ is an edge if and only if $i-j\in
B$.  It is expected that for most of primes $q$ with $q \equiv 1
\mod(4)$ the Paley graphs $P_q$, which is a
Cayley graph $G_B$ with $G = \Z/q\Z$ and $B$ being a set of quadratic
residues, is an example of a graph which  contains neither a clique nor an independent set on more than $c\log n$
vertices. However this is far from being proven and is expected to
be a very difficult problem. It is easy to see that a lower
bound for clique
number of $P_q$ is $n(q)$, where $n(q)$ denotes the least positive integer which is a
quadratic nonresidue modulo $q$.  The best unconditional upper bound known for $n(q)$ is $q^{1/4\sqrt{e} + \epsilon}$ and under the assumption of generalised Riemann hypothesis one knows that $n(q)$ is at most $c\log^2 q.$ The best known upper bound for clique number of
$P_q$ to our knowledge is $\sqrt{q}$ \cite[page 363, Theorem 13.14]{Randomgraphs}. One may ask whether among Cayley graphs, there are graphs (not
necessarily Paley graphs) which
 contains neither a complete subgraph nor an independent set of very
large order. 
The following conjecture is due
to Noga Alon.
\begin{conj}\cite[Conjecture 4.1]{Cayley} There exists an absolute
constant $b$ such that the following holds. For every group $G$ on $n$ elements there
exists a set $B \subset G$ such that the
Cayley graph $G_B$ neither contains a complete subgraph nor an independent
set on more than $b \log n$ vertices.\label{cayley}
\end{conj}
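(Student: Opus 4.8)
The plan is to establish the conjecture by the probabilistic method, exactly as Erd\H{o}s does for general graphs but now constrained to the Cayley family. I would choose $B$ at random by placing each pair $\{g,-g\}$ with $g\neq 0$ into $B$ independently with probability $1/2$; this guarantees $B=-B$ and $0\notin B$. Fix $k=\lceil b\log n\rceil$ and a $k$-element set $S\subseteq G$. Because membership in $B$ depends only on the unordered pair $\{g,-g\}$, the set $S$ spans a complete subgraph in $G_B$ exactly when every nonzero difference $s-s'$ with $s,s'\in S$ lies in $B$, and spans an independent set exactly when every such difference lies in $B^c$; each event therefore has probability $2^{-N(S)}$, where $N(S)$ is the number of distinct $\{\pm g\}$-classes among the nonzero differences of $S$. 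The whole problem reduces to showing that $\sum_{|S|=k}2^{-N(S)}\to 0$ as $n\to\infty$, for a suitable absolute constant $b$.

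The decisive feature is that $N(S)\ge (|S-S|-1)/2$, so a set with a large difference set is extremely unlikely to be monochromatic. I would split the $k$-sets by the size of their difference set. For the \emph{unstructured} sets, those with $|S-S|\ge 4k\log n$, the crude estimate $2^{-N(S)}\le 2^{-2k\log n}$ already beats the total count $\binom{n}{k}\le n^k$, so their contribution is negligible. All the difficulty is concentrated in the \emph{structured} sets, those with small difference set, equivalently small doubling $|S-S|\le K|S|$ with $K\lesssim \log n$: here $2^{-N(S)}$ is far too large for a union bound over $\binom{n}{k}$ sets, and one must instead bound the \emph{number} of such sets. This is precisely the quantity advertised in the title. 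Stratifying by the doubling constant $K$, I would multiply, for each stratum, the count of $k$-sets with $|S-S|\le Kk$ by the monochromatic probability $2^{-Kk/2}$ and sum, aiming to show each term is summably small. The enumeration of small-sumset sets would be supplied by Freiman--Ruzsa structure together with container-type counting (in the style of Green--Morris and Alon--Balogh--Morris--Samotij): a set of small doubling is efficiently trapped in a generalized arithmetic progression or a coset of a small subgroup, and the number of such traps, times the number of large subsets inside each, is controlled.

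The step I expect to be the genuine obstacle — and the reason the conjecture remains open while only a weaker bound is proved below — is achieving this count \emph{uniformly across the subgroup lattice of $G$}. When $G$ is a single $\zp$-vector space, as in Green's setting, there is essentially one scale of structure and the enumeration closes cleanly. For general abelian $G$ of order $n$ there is a proper subgroup attached to each of the $\omega(n)$ prime divisors and to their combinations, and a $k$-set confined to a coset of such a subgroup $H$ has all its differences trapped in $H$, keeping $N(S)$ small. Summing the structured contributions over all subgroups and all their cosets simultaneously is what forces the extra $\omega^3(n)\log\omega(n)$ factor in the theorem below; removing it, as the conjecture demands, seems to require a count of small-sumset sets that is insensitive to how many subgroups $G$ carries, or else a partly algebraic choice of $B$ adapted to the structure of $G$.

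Finally, the conjecture is asserted for \emph{every} group of order $n$, not only abelian ones, whereas the sumset-structure input above is cleanest in the abelian category. An attack on the non-abelian case would have to route through the theory of approximate subgroups, and making that theory quantitatively sharp enough to recover a clique and independence bound of the exact order $\log n$ is, to my mind, the deepest difficulty separating the partial result obtained here from the full conjecture.
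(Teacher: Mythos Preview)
The statement you were asked to prove is Conjecture~\ref{cayley}, and the paper does \emph{not} prove it: it is quoted as an open problem of Alon, and only the weaker Theorem~\ref{alon-conj} (with the extra $\omega^3(n)\log\omega(n)$ term) is established. You clearly recognise this, and your write-up is really a sketch of the strategy behind Theorem~\ref{random-cayley} together with an honest account of why it does not reach $b\log n$. As such there is no ``proof'' to grade; what there is to compare is your description of the method against what the paper actually does.

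On the probabilistic setup, the reduction to $\sum_{k_2}\Card(S^-(k_1,k_2,G))2^{-(k_2-1)/2}$, and the dichotomy between unstructured sets (large difference set, killed by the union bound) and structured sets (small doubling, requiring a count), your outline matches the paper precisely. Where your account diverges is in the mechanism that produces the $\omega(n)$ loss. You attribute it to having to sum structured contributions over all subgroups of $G$ and their cosets. In the paper the loss enters elsewhere: the count of structured sets is obtained by bounding (i) the number of Freiman $2$-isomorphism classes and (ii) the number of sets in $C=G$ in each class. Step~(ii) is handled via the ``universal'' model $A_{r,2}$ and costs only $n^{4k_2\log k_1/k_1}$, with no $\omega(n)$. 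The $\omega(n)$ appears in step~(i), through Lemma~\ref{gf}: a submodule of $(\zm)^k$ generated by a subset $R$ need only be generated by $\omega(m)k$ elements of $R$, not $k$ as over a field, and this inflates the count of isomorphism classes by a power of $k_1^{\omega(n)}$. So the obstruction is algebraic (generating submodules over $\zm$), not a union over cosets.

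Also, the paper does not use container methods in the Green--Morris or Alon--Balogh--Morris--Samotij sense; the enumeration is entirely through Freiman isomorphism classes and the extension lemma for Freiman homomorphisms. Your remark about the non-abelian case is well taken but orthogonal to what the paper addresses.
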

\noindent
For the relation between this conjecture and certain other questions
in information theory, one may see the article of
Noga Alon~\cite{Cayley}.
A weaker version of this conjecture, obtained by
replacing the term $\log n$ by  $\log^2 n$, was proved by N. Alon and
A. Orilitsky in~\cite{alon}.

\vspace{2mm}
\noindent 
Ben Green~\cite{Gclique} proved the above conjecture in the case
when $G$ is cyclic. In the case when $G = (\Z/p\Z)^r$ with $p$ being a prime, he  proved a
weaker version of the above conjecture with the term $\log n$ replaced
by $\log{n}\log\log{n}.$
 It was shown by Green that if we select a subset $B$ of $G$
randomly, then almost surely the Cayley graph $G_B$  contains neither a
complete subgraph nor an independent set of large size.
On the other hand, Green also proved that when $G = (\Z/2\Z)^r$, then for a
random subset $B$, the Cayley graph $G_B$ almost surely contains a
complete subgraph of size at least $c \log n\log\log n$ and thus
showing that the random methods alone can not prove the above
conjecture for a general finite abelian groups. Moreover Ben Green
remarked in~\cite{Gclique} that his methods seems to work only for
certain special groups.\\

\vspace{2mm}
\noindent
In this article we observe
that a modification  of the arguments from~\cite{Gclique}
prove the following weaker version of the above conjecture for any
finite abelian group.
\begin{thm}\label{alon-conj} Let $G$ be a finite abelian group of
order $n$. Then there exist a subset $B$ of $G$ with
$B=-B$ and $0\notin B$, such that the Cayley graph $G_B$ neither
contains a complete subgraph nor an independent set on more than
$c(\omega^3(n)\log {\omega(n)} + \log{n}\log\log{n})$ vertices, where
$\omega(n)$ denotes the number of distinct prime divisors of $n$ and
$c$ is a positive absolute constant.
\end{thm}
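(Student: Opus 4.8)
The plan is to choose $B$ at random and show that with positive probability $G_B$ has neither a clique nor an independent set of size $k$, where $k=c_0(\omega^{3}(n)\log\omega(n)+\log n\log\log n)$ and $c_0$ is an absolute constant to be fixed. Partition $G\setminus\{0\}$ into the orbits $\{g,-g\}$ of negation and include each orbit in $B$ independently with probability $\tfrac12$; then $B=-B$, $0\notin B$, and $B^{c}$ has the same distribution as $B$. Since an independent set of $G_B$ is exactly a clique of $G_{B^{c}}$, it is enough to bound the probability that $G_B$ contains a large clique. Now $S\subseteq G$ spans a complete subgraph of $G_B$ if and only if $(S-S)\setminus\{0\}\subseteq B$; this condition is translation-invariant, so $G_B$ has a clique of size $k$ iff it has one containing $0$. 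For such $S$, put $d(S)=|(S-S)\setminus\{0\}|$ and let $e(S)\ge d(S)/2$ be the number of orbits meeting $S-S$; then $\Prob[(S-S)\setminus\{0\}\subseteq B]=2^{-e(S)}$, and the union bound gives
\[
\Prob[\,G_B\text{ has a clique or independent set of size }k\,]\ \le\ 2\sum_{S\subseteq G,\, |S|=k,\, 0\in S} 2^{-d(S)/2}.
\]
Everything reduces to showing this sum is $<1$.

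I would estimate the sum by splitting the $k$-sets $S$ according to $d(S)$, always using the trivial observation that $0\in S$ forces $S\subseteq S-S$, so a set with a prescribed difference set $T$ is one of at most $\binom{|T|}{k}$ subsets of $T$. For the ``spread-out'' sets, with $d(S)\ge k^{2}/C$, the crude bound $\binom{n}{k}2^{-k^{2}/(2C)}\le(en/k)^{k}2^{-k^{2}/(2C)}$ is already $<\tfrac13$ once $k\ge C'\log n$. The intermediate range $Ck\le d(S)<k^{2}/C$ is handled by a dyadic decomposition exactly as in Ben Green's analysis of $(\zp)^{r}$ in~\cite{Gclique}: bounding, for each dyadic value of $m$, the number of possible difference sets of size $m$ and the number of $k$-subsets they contain. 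The $\log\log n$ factor arises here because the extremal structured cliques are essentially cosets of subgroups of order about $k$, and the count of these only becomes favourable once $k$ has grown to size $\log n\log\log n$.

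The remaining sets, with $d(S)=O(k)$ — that is, $k$-subsets $S$ with $|S-S|\le Kk$ for $K$ an absolute constant — are the crux, and this is where the ``number of sets with small sumset'' result of the title is needed. One requires a Freiman-type structure theorem, uniform over all finite abelian groups, to the effect that after translation such an $S$ is contained in a bounded-complexity container (a coset progression of bounded rank inside a coset of a subgroup whose $p$-ranks are controlled) of size $O_{K}(k)$; one then counts containers and multiplies by $\binom{O_{K}(k)}{k}$. The group $G$ enters this count only through its primary decomposition into at most $\omega(n)$ components, and balancing the number of containers against the saving $2^{-d(S)/2}$ — while tracking how the rank of the container and the structure constants grow with the number of prime factors — is what produces the $\omega^{3}(n)\log\omega(n)$ term. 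I expect this structured case to be the main obstacle: the available structure and counting theorems are calibrated for $\Z$ or for groups of bounded exponent, so the work lies in making the bounds explicit with the dependence on $G$ compressed into $\omega(n)$, and in the optimisation of the trade-off. Once the three ranges are controlled one fixes $c_0$ so that each contributes at most $\tfrac13$ to the union bound, whence the desired $B$ exists.
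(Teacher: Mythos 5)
Your probabilistic set-up matches the paper's, but your claimed reduction --- ``everything reduces to showing $\sum_{S\ni 0,\,|S|=k}2^{-d(S)/2}<1$'' --- is a step that fails, and it fails precisely in the regime the theorem is about. If $G$ has a subgroup $H$ of order roughly $2k$ (e.g.\ $G=(\Z/2\Z)^r$, where the bound $c\log n\log\log n$ is essentially tight), then the $k$-subsets of $H$ alone contribute about $\binom{2k}{k}2^{-(2k-1)/2}\approx 2^{k}$ to your sum, so no amount of cleverness in counting containers can make the total less than $1$: the first moment genuinely blows up on these highly correlated structured events. The paper's proof of Theorem~\ref{random-cayley} gets around exactly this: using the auxiliary set $A_0$ with $\card(A_0)\ll(k_1k_2\log k_1)^{1/3}$ and $a^*-A\subset A_0-A_0$, it shows every $k_1$-clique contains a sub-clique $A'$ with $\card(A')\geq k_1/100$ and $\card(A'-A')\geq 100\,\card(A')$, and then takes the union bound only over such $A'$ (inequality~\eqref{vrelc}), so the probabilistic saving per set is $2^{-50\card(A')}$ rather than $2^{-\card(A')/2}$ and can beat the entropy $\binom{k_2'}{k_1'}$. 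This passage to sub-cliques of difference ratio at least $100$ is a key idea missing from your plan; your ``small-doubling'' case as formulated cannot be closed by any union bound.

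The second gap is the core counting input itself. You propose to handle sets with $|S-S|=O(k)$ via a Freiman/Green--Ruzsa structure theorem and a container count, and you acknowledge this is not worked out. That route is both different from the paper's and quantitatively problematic: coset-progression containers have size exponential in the doubling constant, so $\binom{|\mathrm{container}|}{k}$ swamps the saving $2^{-d(S)/2}$, and moreover the range one must control is not bounded doubling at all --- the paper needs $\card(S^-(k_1,k_2,G))$ for all $k_2\geq 100k_1$ with $k_2/k_1$ unbounded. What the paper actually proves (Theorem~\ref{smallG}) avoids structure theorems entirely: it bounds the number of Freiman $2$-isomorphism classes via the universal presentation $A_{r,2}=F^{k_1}/\la R_2(A)\ra$, the $A_0$-argument, and Lemma~\ref{gf} (a submodule of $(\zm)^{k}$ spanned by a subset $R$ is spanned by at most $\omega(m)k$ elements of $R$ --- this is where $\omega(n)$ enters, not through a primary-decomposition container argument), and then bounds the number of sets in each class by counting homomorphisms, $\card(Hom_2(A,G))\leq n^{4k_2\log k_1/k_1}$ via Lemma~\ref{extension} and the Pl\"unnecke--Ruzsa spanning bound of Lemma~\ref{bgen}. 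Incidentally this also corrects your attribution of the two terms: the $\log n\log\log n$ term comes from the factor $n^{4k_2\log k_1/k_1}$ (sets per isomorphism class), and the $\omega^3(n)\log\omega(n)$ term from the $\omega(n)$-dependence in the count of isomorphism classes, not from cosets appearing in a dyadic intermediate range.
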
  
\noindent 
When the order $n$ of $G$ is such that $\omega(n) \leq (\log
n)^{1/3}$, then Theorem~\ref{alon-conj} gives a weaker version of
Conjecture~\ref{cayley}  with the term $\log n$ replaced by $\log \log
n.$ When $G = (\zp)^r)$, then
 $\omega(n) = 1$ and we obtain the  result of Ben Green mentioned above. 
Since sometimes $\omega(n)$ could be as large as $\frac{\log n}{\log\log n}$,
 which happens when $n$ has several small prime divisors, it is not possible to recover the result of Alon and Orilitsky from Theorem~\ref{alon-conj}. 

\vspace{2mm}
\noindent   The complementary graph of a Cayley graph $G_B$ is the
Cayley graph $G_{B^c}$ with  $B^c = G\setminus (B\cup \{0\}).$ Thus to
prove Theorem~\ref{alon-conj} we need to show the existence of set $B
\subset G$ such that the clique number of $G_B$ as well as that of
$G_{B^c}$ is small. We divide $G\setminus \{0\}$ into disjoint
pairs of the form $(g,-g)$ with $g\in G\setminus\{0\}$. Then we choose
a subset $B$ of $G$ randomly by choosing each such pair in $B$
independently with probability $1/2$. We write ${\it cl(B)}$ to denote
the clique number of the Cayley graph $G_B$. 

\vspace{2mm}
\noindent
In case $G= (\zp)^r$ with $p$ being
a prime, the following result was proved by Ben
Green~\cite[Theorem 9]{Gclique}\label{cayley-vector}, whereas we prove
it for an arbitrary  finite abelian group $G$. Green had stated and
proved his results for Cayley {\em sum} graphs and not for Cayley
graphs. However as he remarked, his arguments after a minimal
modification gives the same result for Cayley graphs. 
\begin{thm}\label{random-cayley} There exists an  absolute constant
$c_1>0$ such  that the following holds. For any finite abelian group
$G$ of order $n$ we have that
$$\lim_{n \to \infty}\Prob\left({\it cl(B)} \geq c_1(\omega^3(n)\log{\omega(n)} +
  \log{n}\log\log{n})\right) =0.$$
\end{thm}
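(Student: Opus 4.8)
The plan is to bound $\Prob(cl(B)\ge k)$ directly for $k=k(n)=c_{1}\bigl(\omega^{3}(n)\log\omega(n)+\log n\log\log n\bigr)$, with $c_{1}$ a large absolute constant fixed at the end; since a clique of size exceeding $k$ contains one of size exactly $k$, the event $cl(B)\ge k$ is simply that some $S\subseteq G$ with $|S|=k$ satisfies $(S-S)\setminus\{0\}\subseteq B$. The decisive move is to union bound over the possible \emph{difference sets} rather than over the sets $S$. Put $\mathcal{D}_{k}=\{\,S-S:\ S\subseteq G,\ |S|=k\,\}$; each $A\in\mathcal{D}_{k}$ is symmetric, contains $0$, and satisfies $k\le|A|\le\binom{k}{2}+1$. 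For symmetric $A\ni0$ let $\rho(A)$ count the pairs $\{g,-g\}$ with $g\ne0$ meeting $A$, so that $\Prob(A\setminus\{0\}\subseteq B)=2^{-\rho(A)}$ and $\rho(A)\ge\tfrac12(|A|-1)$, with strict inequality exactly when $A$ meets the $2$-torsion of $G$ --- which can only help. Since $cl(B)\ge k$ forces $A:=S-S\in\mathcal{D}_{k}$ and $A\setminus\{0\}\subseteq B$,
$$\Prob\bigl(cl(B)\ge k\bigr)\ \le\ \sum_{A\in\mathcal{D}_{k}}2^{-\rho(A)}\ =\ \sum_{d}M(d)\,2^{-d},\qquad M(d):=\#\{A\in\mathcal{D}_{k}:\rho(A)=d\}.$$
The improvement over the naive first moment $\sum_{|S|=k}2^{-\rho(S-S)}$ is essential: many cliques share a difference set --- e.g.\ for $H\le G$ of order about $2k$ a typical $k$-subset $S\subseteq H$ has $S-S=H$, so all $\binom{2k}{k}$-or-so such $S$ yield the single set $A=H$ --- and one checks that the plain first moment already exceeds $1$ by an exponential factor even when $G=(\Z/2\Z)^{r}$.

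It remains to prove $\sum_{d}M(d)2^{-d}=o(1)$, with $d$ ranging over $\bigl[\tfrac12(k-1),\binom{k}{2}\bigr]$. For large $d$, say $d\ge Ck\log n$, the trivial estimate $M(d)\le|\mathcal{D}_{k}|\le\binom{n}{k}\le2^{k\log_{2}n}$ gives $\sum_{d\ge Ck\log n}M(d)2^{-d}=o(1)$ as soon as $C>1$, using only $k\gg\log n$ (which also makes this range nonempty). The substance is the range $d<Ck\log n$. There $A=S-S$ automatically has controlled doubling: the Pl\"unnecke--Ruzsa inequalities give $|A-A|/|A|=O\bigl((|A|/k)^{3}\bigr)$, which is $O(1)$ once $|A|=O(k)$. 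Hence $A$ is constrained by the structure theory of sets with small sumset --- it lies inside a coset progression $H+P$, $H\le G$ and $P$ a proper generalized arithmetic progression, of rank and size controlled by the doubling --- and one bounds $M(d)$ by the product of (i) the number of admissible pairs $(H,P)$ in $G$ of the relevant size, and (ii) a \emph{sharp} count, a ``number of sets with small sumset'' estimate, for the symmetric sets of prescribed size inside a structure of comparable size. Factor (ii) must take the form $2^{o(|A|)}$ times a small structural factor; a crude $2^{O(|A|)}$, the immediate output of Freiman's theorem, would be powerless against the weight $2^{-\rho(A)}=2^{-\Theta(|A|)}$.

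Factor (ii) is precisely where the homogeneous case $(\Z/p\Z)^{r}$ is easy while a general group is not. In $(\Z/p\Z)^{r}$ a symmetric set of small doubling is essentially a subgroup, and the number of subgroups of the relevant (polylogarithmic) orders is $2^{O(\log n\log\log n)}$; weighed against $2^{-\Theta(k)}$ this is exactly what forces $k\gtrsim\log n\log\log n$, recovering Green's theorem. For a general $G$ one must also count the subgroup and progression parts of $H+P$, which requires carrying out the structural dichotomy component by component through the Sylow decomposition $G=\prod_{p\mid n}G_{p}$, together with a union bound over subsets of the $\omega(n)$ primes; following the resulting losses through this bookkeeping is what produces the extra term $\omega^{3}(n)\log\omega(n)$ in the threshold. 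Summing the (essentially geometric) series in $d$ then yields $\sum_{d}M(d)2^{-d}=o(1)$ provided $c_{1}$ is large enough to absorb all implied constants, which proves the theorem.

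The main obstacle is the counting estimate (ii): a sufficiently sharp bound on the number of sets of a given size and small sumset in an arbitrary finite abelian group, and making it mesh with the weights $2^{-\rho(A)}$ uniformly across all scales of doubling --- in particular the borderline regime $|A|\asymp k$, where one is fighting constant against constant and the crude consequence of Freiman's theorem is insufficient. The group-theoretic bookkeeping responsible for the $\omega^{3}(n)\log\omega(n)$ term, and the (harmless but ever-present) presence of $2$-torsion, are secondary complications.
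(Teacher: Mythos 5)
Your first--moment framework (union over difference sets $A=S-S$ with weights $2^{-\rho(A)}$, a trivial count for $\rho(A)\geq Ck\log n$, a structural count below that) correctly identifies the shape of the argument and even diagnoses why the naive union over cliques fails, but it stops exactly where the actual work begins. The whole content of the theorem is your ingredient (ii), and you never state, let alone prove, a usable form of it: you need something like $M(d)\leq 2^{(1-\epsilon)d}$ uniformly for $\tfrac12(k-1)\leq d\leq Ck\log n$ in an arbitrary finite abelian group, and no such bound is derived. Your proposed route --- Freiman's structure theorem giving a coset progression $H+P$, then counting pairs $(H,P)$ and symmetric sets inside --- is precisely the route you concede yields only $2^{O(|A|)}$, and you supply no mechanism for improving it to $2^{o(|A|)}$. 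The threshold term $\omega^3(n)\log\omega(n)$ is likewise only asserted to emerge from ``bookkeeping'' through the Sylow decomposition; in the paper it has a concrete and quite different source, untouched by your sketch: the exponent $3$ comes from the bound $\card(A_0)\ll(k_1k_2\log k_1)^{1/3}$ on a small set $A_0$ with $a^*-A\subset A_0-A_0$ (Corollary~\ref{sa0}, i.e.\ Green's Proposition~15 rerun with parameters $Q,q$ chosen so that no hypothesis $k_2\leq k_1^{31/30}$ is needed), and the factor $\omega(n)$ comes from the fact that a submodule of $(\zm)^{k}$ may need up to $\omega(m)k$ generators (Lemma~\ref{gf}), which enters the count of Freiman isomorphism classes (Lemma~\ref{dependence}, Proposition~\ref{nfc}) and hence Theorem~\ref{smallG}. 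Finally, your doubling control ``$|A-A|/|A|=O((|A|/k)^3)=O(1)$ once $|A|=O(k)$'' does not cover the range $k\ll|A|\ll k\log n$, which your trivial estimate does not reach either; there the Pl\"unnecke--Ruzsa constant is only $O(\log^3 n)$ and your sketch gives no control at all.

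It is worth noting that your decomposition is genuinely different from the paper's: the paper never unions over difference sets. Instead, inside any clique $A$ it passes to a sub-clique $A'$ with $\card(A')\geq k_1/100$ and $\card(A'-A')\geq 100\,\card(A')$ (possible because $A_0$ is small while $A_0-A_0\supset a^*-A$ has size $\geq k_1$), so that the weight $2^{-(k_2'-1)/2}$ with $k_2'\geq 100k_1'$ beats the count $\card(S^-(k_1',k_2',G))$ supplied by Theorem~\ref{smallG}; this is how the subgroup obstruction in $(\Z/2\Z)^r$ is neutralised. Your union over difference sets is a legitimate alternative device for the same purpose, but as it stands it trades the paper's proved counting theorem for an unproved (and nonstandard) ``counting of difference sets'' estimate, so the proposal has a genuine gap rather than being an alternative proof.
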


\begin{rem}
Using the arguments of this paper and the result~\cite[Proposition 19]{Gclique}  proved by Green, one can show that the clique number of random Cayley graph is at most 
  $c_1(\omega^{\frac{3(1+\alpha)}{1+2\alpha}}(n)\log{\omega(n)} +
  (\log{n}\log\log{n})^{1+\alpha})$ for any $\alpha \in [0,1].$ When $\omega(n) \leq \log^{1/3}n$, the choice of $\alpha =0$ is optimal. Taking $\alpha =0$, we recover the result of Theorem~\ref{random-cayley}. When $\omega(n)$ is of the order $\frac{\log n}{\log \log n}$, then taking $\alpha=1$, we obtain the bound $c_1(\log n\log\log n)^2.$
\end{rem}

\vspace{3mm}
\noindent We observe that Theorem~\ref{alon-conj} follows immediately
from Theorem~\ref{random-cayley}, using the following inequality:
%\begin{eqnarray*}
$$\Prob({\it cl(B)}\geq k_1 \text{ or } {\it cl(B^c)}\geq k_1)
\leq \Prob({\it cl(B)} \geq k_1) + \Prob({\it cl(B^c)}\geq k_1)
 = 2\Prob({\it cl(B)} \geq k_1),$$
%\end{eqnarray*}
where the last equality follows using the fact that for any pair $\{g,-g\}$ with $g \in G\setminus \{0\}$, the probability that the pair belongs to $B$ is equal to the probability that it belongs to $B^c.$
%%%%%%%
%MORE EXPLANATION OF DEDUCTION OF THEOREM~\ref{alon-conj} from
%Theorem~\ref{random-cayley}.
\begin{comment} Since $B^c$ is also a random set, we have that
$\Prob({\it cl(B^c)}\geq k_1)$ is same as $\Prob({\it cl(B)}\geq
k_1)$. Now taking $\epsilon = 1/4$ in Theorem~\ref{random-cayley} and
taking $k_1 = c_1(\epsilon)\left(\omega^3(n)\ln{\omega(n)} +
\ln{n}\ln\ln{n} \right)$ in the above inequality, the right hand side
of the above inequality is at most $1/2$. This implies  that there
exist a subset $B$ of $G$  with the property that $\max({\it cl(B)},
\omega(B^c)) < k_1$. The choice of such a $B$ evidently satisfies the
conclusion of Theorem~\ref{alon-conj}.

%%%

\vspace{2mm}
\noindent
Green had stated and proved his results for Cayley sum graphs and not Cayley graphs. However as he remarked, the arguments with a minimal modification proves the result for Cayley graphs.
For any subset $B$ of $G$ (not necessarily $B = -B$),  the {\em Cayley
sum graph} $G_B$ is a graph on the vertex set $G$ in which $ij$ is an
edge if and only if $i+j \in B$ and $i \ne j$. We choose a subset $B$ of $G$
randomly by choosing any element of $G$ in $B$ independently with
probability $1/2$.  We write $\omega^+(B)$ to denote the clique number
of the Cayley sum graph $G_B$.  
\end{comment}
%%%%

\vspace{2mm}
\noindent
For any positive integers $k_1$ and $k_2$ we set
\begin{eqnarray}
 %S(k_1,k_2,G) &=& \{A \subset G: \card(A) =k_1, \card(A\hp A) = k_2\}, \label{sumset}\\
%\text{ and }
S^-(k_1,k_2,G) &=& \{A \subset G: \card(A)= k_1, \card(A-A) = k_2\},\label{diffset}
\end{eqnarray}
where
 $A -A$ denotes the subset of $G$ consisting of those elements
which can be written as a difference of two elements from $A$. 
In~\cite{Gclique}, Green observed the following inequality which relates
the clique number of random Cayley graph and the cardinality of $S^-(k_1,k_2,G)$.
\begin{equation}\label{relc} \Prob({\it cl(B)}\geq k_1) \leq \sum_{k_2 \geq k_1}\frac{\Card\left(S^-(k_1,k_2,G)\right)}{2^{(k_2-1)/2}}.
\end{equation} 
Presently, we recall the arguments from~\cite{Gclique} which prove~\eqref{relc}.
 The probability that the clique number ${\it cl(B)}$ of a
random Cayley graph $G_B$ is greater than or equal to $k_1$ is same as
the probability that there exist  a set $A \subset G$ with $\card(A)
=k_1$ which spans a complete subgraph in $G_B$. The subgraph  of $G_B
$ spanned by the vertices of $A$ is  complete if and only if
$(A-A)\setminus \{0\}$ is a subset of $B$. If $\card(A-A)= k_2$, it
contains at least $\frac{k_2-1}{2}$ disjoint pairs of the form
$(g,-g)$ with $g\in G\setminus \{0\}$. Thus the probability that $A$
spans a complete subgraph is at most $\frac{1}{2^{(k_2-1)/2}}$. Therefore we have
\begin{equation*} \Prob({\it cl(B)}\geq k_1) \leq \sum_{k_2
\geq k_1}\sum_{A \in S^-(k_1,k_2,G)}\Prob((A-A)\setminus \{0\} \subset
B) \leq \sum_{k_2 \geq k_1}\frac{\Card\left(S^-(k_1,k_2,G)\right)}{2^{(k_2-1)/2}}.
\end{equation*} 
For any positive integers $k_1$ and $k_2$ we also set 
\begin{equation}
S(k_1,k_2,G) = \{A \subset G: \card(A) =k_1, \card(A\hp A) \leq k_2\}, \label{sumset}
\end{equation}
where $A\hp A$ denotes those elements of $G$ which can be written as a
sum of two distinct elements of $A.$

\vspace{2mm}
\noindent
 The following result was stated in~\cite{Gclique} when $G = (\zp)^r$ with $p=2$,
but the arguments give the same result when $p$ is an arbitrary
prime. Moreover the arguments gives the same upper bound for $\card(S^-(k_1,k_2,(\zp)^r).$
\begin{thm}\cite[Proposition 26]{Gclique} For any prime  $p$, we
  have, \label{smallvector}
\[ \Card(S(k_1,k_2,(\Z/p\Z)^r)) \leq n^{\frac{4k_2\log
k_1}{k_1}}\left(\frac{ek_2}{k_1}\right)^{k_1} \exp(k_1^{31/32})
\] if $k_2 \leq k^{31/30}$ and
\[ \Card(S(k_1,k_2,(\Z/p\Z)^r)) \leq n^{\frac{4k_2\log
k_1}{k_1}}k_1^{4k_1}
\] for all $k_2$. (Here $n = p^r$ is the order of $(\Z/p\Z)^r.$)
\end{thm}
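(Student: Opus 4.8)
The plan is to normalize $A$ to a set of small doubling and then exploit the structure forced by a small restricted sumset. First, replacing $A$ by a translate $A+g$ changes neither $\card(A)$ nor $\card(A\hp A)$ (the set $A\hp A$ is merely shifted by $2g$), so it suffices to count those $A$ with $0\in A$ and to multiply by at most $n$ at the end; and if $0\in A$, then every nonzero element of $A$ equals $a+0$ for some $a\in A$, whence $A\setminus\{0\}\subseteq A\hp A$ and in particular $k_2\ge k_1-1$ — so once $k_1$ exceeds an absolute constant the exponent $4k_2\log k_1/k_1$ is at least $1$ and the extra factor $n$ is harmless. With $0\in A$ we also get $|A+A|\le|A\hp A|+1\le k_2+1$, so $A$ has doubling constant $K:=(k_2+1)/k_1$.

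Second, the inclusion $A\subseteq(A\hp A)\cup\{0\}$ already does most of the counting: once $S:=A\hp A$ is known, $A$ is a $k_1$-subset of the $(\le k_2+1)$-element set $S\cup\{0\}$, and there are at most $\binom{k_2+1}{k_1}\le(ek_2/k_1)^{k_1}$ of these. Thus everything reduces to bounding the number $N$ of sets that occur as $A\hp A$ for an admissible $A$, after which $\Card(S(k_1,k_2,(\Z/p\Z)^r))\le n\cdot N\cdot(ek_2/k_1)^{k_1}$. Note that the naive route to bounding $N$ — reveal the elements of $A$ one at a time and use the partial sumset to locate each new one — charges about $n$ whenever a genuinely new element of $A\hp A$ is produced, hence $n^{\Theta(k_2)}$ overall, far above the target $n^{4k_2\log k_1/k_1}$; structural input is what must replace this.

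Third, to bound $N$ I would invoke the theory of sets of small doubling in $(\Z/p\Z)^r$. By Plünnecke--Ruzsa, $S+S\subseteq A+A+A+A$ and $|A+A+A+A|\le K^{O(1)}k_1$, so $S$ itself has doubling at most $K^{O(1)}$; feeding a set of small doubling into Green's quantitative Freiman/Bogolyubov-type estimates for $(\Z/p\Z)^r$ (established for $p=2$ in~\cite[Proposition 19]{Gclique}, the argument being insensitive to the prime) shows that $A$ is contained in a structured set, namely a controlled number $t$ of cosets of a subgroup $V$, with $|V|$ and $t$ bounded in terms of $K$ and $k_1$. One then reconstructs $A$ by choosing $V$, choosing which of its cosets are met, and choosing $A$ inside the resulting set, and balances the three costs against the target; the essentially unstructured regime, where $\la A\ra$ is already of size $O(k_1)$, is handled separately and more cheaply by choosing the subgroup $\la A\ra$ (at most $n^{O(\log k_1)}$ ways) and then $A$ as an arbitrary subset of it (at most $2^{O(k_1)}$ ways).

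The hard part, and the source of the peculiar exponents, is the quantitative quality of the structure theorem: with the tools available here one can confine $A$ to cosets of a subgroup of size polynomial in $k_1$ only when $K$ is fairly small, and the structural error (exceptional cosets, or the slack inherent in the Freiman-type statement) is only bounded by $\exp(k_1^{1-c})$ for some small $c>0$, rather than polynomially in $k_1$. One must therefore pinpoint how much doubling can be tolerated before the contribution $n^{t}$ from the $t$ relevant cosets overruns $n^{4k_2\log k_1/k_1}$; this produces the breakpoint $k_2\le k_1^{31/30}$, and the accumulated structural error accounts for the factor $\exp(k_1^{31/32})$. For $k_2$ beyond $k_1^{31/30}$ (up to roughly $k_1^2$) one runs the same argument with coarser bookkeeping — or simply invokes the trivial bound $\binom n{k_1}$ — to obtain the weaker unconditional estimate $n^{4k_2\log k_1/k_1}k_1^{4k_1}$. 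Finally, the bound for $\Card(S^-(k_1,k_2,(\Z/p\Z)^r))$ comes out identically, since $A-A$ is also translation-stable and satisfies $|A-A|\le|A+A|^2/|A|$, so it inherits the same small-doubling hypothesis.
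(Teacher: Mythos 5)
Your opening reductions are fine (translating so that $0\in A$, noting $A\setminus\{0\}\subseteq A\hp A$, and paying $\binom{k_2+1}{k_1}\leq (e(k_2+1)/k_1)^{k_1}$ to recover $A$ from its sumset), and they do explain where a factor of the shape $(ek_2/k_1)^{k_1}$ could come from. But the heart of the theorem --- the factors $n^{4k_2\log k_1/k_1}$ and $\exp(k_1^{31/32})$, and the threshold $k_2\leq k_1^{31/30}$ --- is exactly the part you leave to an unproved ``quantitative Freiman/Bogolyubov-type'' statement, and no such statement with the strength you need is available. For your bookkeeping to fit the budget you need $A$ covered by $t\ll k_2\log k_1/k_1$ cosets of a subgroup $V$ with $t\,\card(V)$ of order $k_2$ (so that choosing the cosets costs at most $n^{t}$ and choosing $A$ inside them costs at most roughly $(ek_2/k_1)^{k_1}$), uniformly for doubling up to $K\approx k_1^{1/30}$, with total multiplicative error only $\exp(k_1^{31/32})$. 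Known structure theorems in bounded torsion lose exponentially in $K$ (or at best polynomially with large exponents), and nothing in Green's paper supplies this; his Proposition~19 is not such a covering theorem. Asserting that ``the accumulated structural error accounts for the factor $\exp(k_1^{31/32})$'' reverses the logic: in the actual argument these exponents arise from a specific probabilistic selection of a small subset $A_0\subset A$ with $\card(A_0)\ll(k_1k_2\log k_1)^{1/3}$ and $a^*+(A\setminus\{a^*\})\subset A_0\hp A_0$, not from a Freiman structure theorem. Your fallback for the second, unconditional bound also has a hole: the trivial estimate $\binom{n}{k_1}\leq n^{k_1}$ is only dominated by $n^{4k_2\log k_1/k_1}$ once $k_2\gtrsim k_1^2/\log k_1$, so the whole range $k_1^{31/30}\leq k_2\ll k_1^2/\log k_1$ is unaccounted for, and ``the same argument with coarser bookkeeping'' is not specified.

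For comparison, the route this paper takes (following Green, whose Proposition~26 is what is being cited) is structurally different and is what actually produces each factor. Every $A$ is Freiman $2$-isomorphic to a universal set $A_{r,2}$ (Lemma~\ref{Ars}); Freiman homomorphisms of $A_{r,2}$ extend to $F$-linear maps on $\la A_{r,2}\ra$ (Lemma~\ref{extension}); and $\la A_{r,2}\ra$ is generated by at most $4k_2\log k_1/k_1$ elements via Pl\"unnecke--Ruzsa together with a distinct-subset-sums argument (Lemma~\ref{bgen}). Hence each isomorphism class has at most $n^{4k_2\log k_1/k_1}$ members (Proposition~\ref{1gen}) --- this is where that exponent really comes from, with no need to locate $A$ inside cosets of a subgroup. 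The number of isomorphism classes is then bounded by $s_1^{O(s_1)}\binom{s_2}{k_1-1}(k_1^3+1)$ with $s_1=\card(A_0)\ll(k_1k_2\log k_1)^{1/3}$ (Proposition~\ref{nfc} and Corollary~\ref{sa0}), giving $(ek_2/k_1)^{k_1}\exp(k_1^{31/32})$ when $k_2\leq k_1^{31/30}$, while the unconditional factor $k_1^{4k_1}$ comes from counting the possible spans of additive relations (Lemma~\ref{dependence}), a mechanism your proposal has no substitute for. As it stands, your argument is a plausible reduction plus an appeal to a structure theorem that does not exist in the required form, so it does not constitute a proof.
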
 
We prove the following result.
\begin{thm}\label{smallG} Let $G$ be a finite abelian group of order
$n$. Then the cardinality of $S^-(k_1,k_2,G)$ as well
as the cardinality of $S(k_1,k_2,G)$ is at most
\begin{equation}\label{Gbound}
n^{\frac{4k_2\log k_1}{k_1}}\min(k_1^{c\omega(n)(k_1k_2)^{1/3}\log
  k_1}\binom{k_2}{k_1-1}(k_1^3 +1), k_1^{4\omega(n)k_1}),
\end{equation} where $c$ is a positive absolute constant.
\end{thm}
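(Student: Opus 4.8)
The strategy is to reduce the count over an arbitrary finite abelian $G$ to the elementary abelian case of Theorem~\ref{smallvector} by treating the prime divisors of $n$ one at a time. I shall bound $\Card(S(k_1,k_2,G))$; the bound for $\Card(S^-(k_1,k_2,G))$ follows by the identical argument with $A\hp A$ replaced throughout by $A-A$, exactly as in Theorem~\ref{smallvector}.

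Write $n=p_1^{a_1}\cdots p_\omega^{a_\omega}$, $\omega=\omega(n)$, and use the Sylow decomposition $G=G_{p_1}\oplus\cdots\oplus G_{p_\omega}$ with projections $\pi_i\colon G\to G_{p_i}$. For $A\in S(k_1,k_2,G)$ put $A_i=\pi_i(A)$, $k_1^{(i)}=\card(A_i)$, $k_2^{(i)}=\card(A_i\hp A_i)$, so $A\subseteq A_1\times\cdots\times A_\omega$; since $\card(A)=k_1$, the set $A$ is recovered from $(A_1,\dots,A_\omega)$ together with the map recording, for each element of $A$, its $\omega$-tuple of coordinates among the $A_i$, and there are at most $(k_1^{\,\omega})^{k_1}=k_1^{\,\omega k_1}$ such maps. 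Each $G_{p_i}$ is an abelian $p_i$-group, and Green's proof of Theorem~\ref{smallvector} carries over to it (the linear algebra over the prime field being replaced by its counterpart using the $p_i$-rank in place of the dimension), so the number of admissible $A_i$ with given parameters $k_1^{(i)},k_2^{(i)}$ is at most the right side of Theorem~\ref{smallvector} with $n$ replaced by $|G_{p_i}|\le n$. Feeding in the bound of Theorem~\ref{smallvector} valid for all $k_2$, the $i$-th component contributes at most $|G_{p_i}|^{4k_2^{(i)}(\log k_1^{(i)})/k_1^{(i)}}(k_1^{(i)})^{4k_1^{(i)}}$; the product of the combinatorial parts is $\le k_1^{4\omega k_1}$, and since $\prod_i|G_{p_i}|=n$ the product of the $|G_{p_i}|$-powers equals $n^{4k_2(\log k_1)/k_1}$ once one knows that $k_2^{(i)}(\log k_1^{(i)})/k_1^{(i)}\le k_2(\log k_1)/k_1$ for every $i$. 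Absorbing the reconstruction factor $k_1^{\,\omega k_1}$ and the polynomial loss $k_1^{O(\omega)}$ from summing over the admissible $(k_1^{(i)},k_2^{(i)})$ into the exponent, this yields the second entry $n^{4k_2(\log k_1)/k_1}k_1^{4\omega(n)k_1}$ of~\eqref{Gbound}.

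For the first entry one reruns the argument with the sharper bound of Theorem~\ref{smallvector}, valid for $k_2\le k_1^{31/30}$, organised so that after a ``core'' of $A$ of size $O(k_2(\log k_1)/k_1)$ — the core being what supplies the common factor $n^{4k_2(\log k_1)/k_1}$ — and a bounded number of auxiliary elements have been named, the set $A$ is confined to an explicit container of size $k_1^{O(1)}$ in $G$. Choosing, relative to a base point $a_0\in A$, the $k_1-1$ nonzero elements of $A-a_0$ among the $k_2$ elements of $A\hp A$ (resp.\ $A-A$), with at most $k_1^3+1$ residual choices for the base point and the container, accounts for the factor $\binom{k_2}{k_1-1}(k_1^3+1)$; the $\omega(n)$-fold product of Green's error term $\exp(k_1^{31/32})$ and the extra loss caused by the $G_{p_i}$ not being vector spaces are absorbed into $k_1^{\,c\,\omega(n)\,(k_1k_2)^{1/3}\log k_1}$. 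Taking the minimum of the two estimates gives~\eqref{Gbound}.

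I expect the crux to be the two structural inputs above. The first is that the normalised doubling $\card(X-X)/\card(X)$ (and $\card(X\hp X)/\card(X)$) does not increase under a surjective group homomorphism; this is a Ruzsa-type inequality, and it must hold with the precise constant so that the factor $n^{4k_2(\log k_1)/k_1}$ is paid only once, not $\omega(n)$ times. The second is that the reconstruction of a small-doubling $A$ from its Sylow projections stays within the stated combinatorial budget — equivalently, that the smallness of $\card(A-A)/\card(A)$ is genuinely inherited by the way the components interlock and is not lost when they are glued back together; making this precise, and tracking the bookkeeping so that the exponents in~\eqref{Gbound} come out as stated, is where the real work lies.
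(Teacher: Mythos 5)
Your route (Sylow decomposition, projecting $A$ to each $p$-component, invoking Green's count there, then gluing) is genuinely different from the paper's, but as written it has gaps at exactly the points where the work has to happen. The central one: you assert that Green's proof of Theorem~\ref{smallvector} ``carries over'' to an arbitrary abelian $p_i$-group with ``$p_i$-rank in place of the dimension''. That extension is not a routine substitution -- it is essentially the case $\omega(n)=1$ of the very theorem you are proving, and it is where the paper's new input lies. Green's argument uses that $(\Z/p\Z)^r$ is a vector space over a field; for groups of exponent $p^a$ (and, in the paper, for general $G$ with $F=\zm$, $m$ the exponent) one needs the module-theoretic machinery: the universal presentation $A_{r,2}=F^{k_1}/\la R_2(A)\ra$ (Lemma~\ref{Ars}), the extension of Freiman homomorphisms to $F$-linear maps (Lemma~\ref{extension}), the Pl\"unnecke--Ruzsa generation bound (Lemma~\ref{bgen}, Proposition~\ref{1gen}), and Lemma~\ref{gf}, which is the sole source of the $\omega$ factors. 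Notably the paper never decomposes $G$ at all: working over $\zm$ the factor $n^{4k_2\log k_1/k_1}$ is paid once automatically, so the delicate projection inequality you need never arises.

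Even granting the componentwise input, the gluing does not deliver the stated bound. (i) The inequality $k_2^{(i)}\log k_1^{(i)}/k_1^{(i)}\le k_2\log k_1/k_1$, which you yourself flag as the crux, is not a standard Ruzsa-type fact: only $k_1^{(i)}\le k_1$ and $k_2^{(i)}\le k_2$ are immediate, and the ratio moves the wrong way when the projection collapses; you give no proof, and without it the $n$-power is not paid only once. (ii) The reconstruction factor $k_1^{\omega k_1}$ and the sum over tuples $(k_1^{(i)},k_2^{(i)})$ cannot be ``absorbed'' into $k_1^{4\omega(n)k_1}$ when the componentwise factors $(k_1^{(i)})^{4k_1^{(i)}}$ already multiply up to $k_1^{4\omega k_1}$ in the worst case; at best you obtain a larger constant than the stated exponent $4$. (iii) The claim that the $\omega(n)$-fold product of Green's error term $\exp(k_1^{31/32})$ is absorbed into $k_1^{c\omega(n)(k_1k_2)^{1/3}\log k_1}$ is false in the relevant regime $k_2\asymp k_1$, where $(k_1k_2)^{1/3}\log^2 k_1\ll k_1^{31/32}$. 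The paper avoids that error term altogether by re-running Green's covering argument with re-optimised parameters (the lemma preceding Corollary~\ref{sa0}, with $Q=[k_1^{4/3}k_2^{-2/3}\log^{1/3}k_1]$), which gives $\card(A_0)\ll(k_1k_2\log k_1)^{1/3}$ for all $k_2$, and then counts Freiman $2$-isomorphism classes via Proposition~\ref{nfc} and sets within a class via Proposition~\ref{1gen}. So your proposal is a different and interesting reduction in spirit, but its two structural inputs are unproven (and one absorption claim is wrong), while the componentwise statement you invoke is essentially the theorem itself.
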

\noindent To prove Theorem~\ref{smallvector}, Green proved the
following:
\begin{enumerate}
\item an upper bound for the number of Freiman 2-isomorphism class of
  sets in $S(k_1,k_2,G)$, 
\item an upper
  bound for the cardinality of the set $Hom_2(A,G)$, where $Hom_2(A,G)$ consists of all
  Freiman homomorphism from $A$ into $G,$
\end{enumerate}
when $G = (\zp)^r.$ 
We prove Theorem~\ref{smallG} by proving the same for general $G.$
For obtaining an upper bound for $\card(Hom_2(A,G))$, 
 we observe that
$A$ is Freiman 2-isomorphic to a subset $A_{r,2}$ of a possibly different group $G'$ such that $A_{r,2}$ have the following ``universal'' property. Any Freiman $2$-homomorphism from $A_{r,2}$ into $G$ extends as a group homomorphism from the group $\la A_{r,2}\ra$ into $G$, where $\la A_{r,2}\ra$ is the subgroup of $G'$ generated by $A_{r,2}$. Hence the group $Hom_2(A_{r,2},G)$ is isomorphic to $Hom(\la A_{r,2} \ra, G)$ (Lemma~\ref{extension}), where $Hom(\la A_{r,2}\ra, G)$ is the group consisting of all group homomorphism from $\la A_{r,2}\ra$ into $G$. This shows that $\card(Hom_2(A,G)) \leq n^{r(\la A_{r,2}\ra)}$,
 where $r(\la A_{r,2}\ra)$ is the rank of the group $\la
 A_{r,2}\ra$. An upper bound for the rank of $\la A_{r,2}\ra$ follows
 from a result proved by Green. The arguments used by Green in obtaining an upper bound for the number
of Freiman 2-isomorphism classes of sets works for general $G$ without
much difficulty. We need to use Lemma~\ref{gf} which follows from a
standard inductive argument.

\vspace{2mm}
\noindent
Given a positive integer $s$, for any finite subset $A$ of an $F$-module with $F$ being one of the following two rings $\zm$ and $\Q$, in Section~\ref{Freiman} we define the Freiman $s$-rank $r_s(A)$ to be the rank of the module $Hom_s(A,F)$. We prove 
Corollary~\ref{analogous} which generalises the result~\cite[Corollary 14]{Gclique} proved in the case of $F$ being a field. Although we do not require Corollary~\ref{analogous} to prove other results of this article, the result may be of an independent interest. The result shows that in case $F= \Q$, the Freiman $2$-rank of $A$ as defined above is same as the rank of $A$ as defined by Freiman. Using this fact Green observed that  the factor $n^{\frac{4k_2\log k_2}{k_1}}$ in~\eqref{Gbound} could be improved to $n^{\frac{4k_2}{k_1}}$ for a cyclic group, which allowed him to prove Conjecture~\ref{cayley} for cyclic groups.
\section{Number of sets with small sumset}
Let $m$ be a fixed positive integer. In the sequel, we fix $F$ to be either $\zm$ or $\Q$. Let $M$ be a finitely generated $F$-module.
If $F = \zm$, then  $M$ is a finite abelian group of  exponent $m'$ which is a divisor of $m$ and in case $F = \Q$ then $M$  is a
finite dimensional vector space over $\Q$. Given any subset $A$ of $M$ we write $\la A\ra$ to denote the submodule of $M$
spanned by $A$. Notice that if $F =\zm$, then $\la A\ra$ is same as the
subgroup
generated by $A$, but if $F = \Q$ then in general the subgroup generated 
by $A$  is a proper subset of
 $\la A\ra$.
Given any finite subset $C$ of
$M$, we  set
 $$S(k_1,k_2,C,M) = \{A \in S(k_1,k_2,M): A \subset C\},$$

$$\text{ and } S^-(k_1,k_2,C,M)) = \{A \in S^-(k_1,k_2,M): A \subset C\},$$
where $S(k_1,k_2,M)$ and $S^-(k_1,k_2,M)$ are as defined in~\eqref{sumset}
 and~\eqref{diffset} respectively. 

\vspace{1mm}
\noindent
For the purpose of obtaining an upper bound for clique number of
random Cayley sum graphs in a cyclic group of order $n$, an
upper bound for the cardinality of $S(k_1,k_2,C,M)$ with $M = F= \Q$ and 
$C =\{0,1,\ldots,n-1\}$ was used by Green in~\cite{Gclique}.

\vspace{2mm}
\noindent {\em Freiman s-homomorphism}: Let $s$ be a positive integer,
let $A$ and $B$ be subsets of (possibly different) abelian groups and
let $\phi: A \to B$ be a map. Then we say that $\phi$ is a Freiman
$s$-homomorphism if whenever $a_1,\ldots,a_s,a_1'\ldots,a_s' \in A$
satisfy
\begin{equation}\label{rel-domain} a_1 + a_2 +\ldots +a_s = a_1' +
a_2' +\ldots + a_s'
\end{equation} we have
\begin{equation}\label{rel-range} \phi(a_1) + \phi(a_2) +\ldots
+\phi(a_s) = \phi(a_1') + \phi(a_2') +\ldots + \phi(a_s').
\end{equation} If $\phi$ has an inverse which is also $s$-homomorphism
then we say that it is a Freiman $s$-isomorphism. We shall refer to Freiman
$2$-homomorphisms simply as Freiman homomorphisms.

\vspace{2mm}
\noindent We shall obtain an upper bound for $\card(S(k_1,k_2, C,M))$ by
obtaining an upper bound for the number $c(k_1,k_2,C,M)$ of Freiman isomorphism
classes of sets in $S(k_1, k_2, C,M)$ and an upper bound for the number
$n(A,C)$ of subsets of $C$ which are Freiman isomorphic to $A$ for any given 
$A \in S(k_1,k_2,C, G)$. Then we have 
\begin{equation}\label{strategy}
\Card\left(S(k_1,k_2,C,M)\right)\leq c(k_1,k_2,C,M)
\max_{A\in S(k_1,k_2,C,M)}n(A,C).
\end{equation}
Using similar arguments we shall obtain an upper bound for 
$\Card\left(S^-(k_1,k_2,C,M)\right)$.

\vspace{2mm}
\noindent Let $A$ be a subset of $M$ with $\card(A) = k_1$.  Let $e_1,
e_2, \ldots, e_{k_1}$ be the canonical basis of $F^{k_1}$. We write
$R_s$ to denote the subset of $F^{k_1}$ consisting of the elements of
the form
\[ e_{i_1} + e_{i_2} + \ldots +e_{i_s} - e_{j_1} -e_{j_2} - \ldots -
e_{j_s},
\] where $i'$s and $j'$s need not be distinct. For any subset $A =
\{a_1, a_2. \dots, a_{k_1}\} \subset G$, let $\phi : F^{k_1} \to G$ be
the $F$-linear map with $\phi(e_i) = a_i$. We write $R_s(A)$ to denote
the set $R_s \cap \ker(\phi)$. Let 
$A_{r,s} = \{\bar{e_{1}}, \ldots,\bar{e_{k_1}}\}$ be the image of
$\{e_{1},e_{2},\ldots, e_{k_1}\}$ in $F^{k_1}/\la R_s(A)\ra$
under the natural projection map from $F^{k_1}$ to $F^{k_1}/\la
R_s(A)\ra$. Then $\phi$ induces a map $\bar{\phi}: A_{r,s} \to A$.
\begin{lem}\label{Ars} With the notations as above, the map
$\bar{\phi}: A_{r,s} \to A$ is a Freiman $s$-isomorphism.
\end{lem}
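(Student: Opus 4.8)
The plan is to verify the three defining properties of a Freiman $s$-isomorphism for $\bar{\phi}$: that it is a well-defined surjection $A_{r,s}\to A$, that it is a Freiman $s$-homomorphism, and that the inverse map is also a Freiman $s$-homomorphism (injectivity of $\bar{\phi}$ will drop out of the last step). Throughout, the one structural fact doing the work is that $R_s(A)=R_s\cap\ker(\phi)$ records \emph{every} additive relation of length $s$ among the elements of $A$, essentially by the definition of $R_s$.

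First I would note that since $R_s(A)\subseteq\ker(\phi)$ and $\ker(\phi)$ is an $F$-submodule of $F^{k_1}$, we have $\la R_s(A)\ra\subseteq\ker(\phi)$; hence $\phi$ factors through the projection $F^{k_1}\to F^{k_1}/\la R_s(A)\ra$, and restricting the induced $F$-linear map to $A_{r,s}=\{\bar{e_1},\dots,\bar{e_{k_1}}\}$ gives exactly $\bar{\phi}$, with $\bar{\phi}(\bar{e_i})=a_i$. In particular $\bar{\phi}$ is well defined and its image is $A$, so it is surjective. For the homomorphism property, suppose $\bar{e_{i_1}}+\dots+\bar{e_{i_s}}=\bar{e_{i_1'}}+\dots+\bar{e_{i_s'}}$ in $F^{k_1}/\la R_s(A)\ra$, with indices allowed to repeat. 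Then $v:=e_{i_1}+\dots+e_{i_s}-e_{i_1'}-\dots-e_{i_s'}$ lies in $\la R_s(A)\ra\subseteq\ker(\phi)$, and applying $\phi$ yields $a_{i_1}+\dots+a_{i_s}=a_{i_1'}+\dots+a_{i_s'}$, which is precisely what is required.

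For the inverse I would argue in the opposite direction, and this is where the precise shape of $R_s$ matters. Suppose $a_{i_1}+\dots+a_{i_s}=a_{i_1'}+\dots+a_{i_s'}$ holds in $M$. Then $v:=e_{i_1}+\dots+e_{i_s}-e_{i_1'}-\dots-e_{i_s'}$ belongs to $R_s$ by definition and to $\ker(\phi)$ by hypothesis, so $v\in R_s(A)$ and therefore $\bar{e_{i_1}}+\dots+\bar{e_{i_s}}=\bar{e_{i_1'}}+\dots+\bar{e_{i_s'}}$ in the quotient. Specialising this implication to the relation $a_i+(s-1)a_1=a_j+(s-1)a_1$ shows that $a_i=a_j$ forces $\bar{e_i}=\bar{e_j}$; since $\card(A)=k_1$ the $a_i$ are distinct, hence so are the $\bar{e_i}$ and $\bar{\phi}$ is a bijection. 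Defining $\psi:A\to A_{r,s}$ by $\psi(a_i)=\bar{e_i}$, which is unambiguous because $\bar{\phi}$ is injective, the displayed implication says exactly that $\psi=\bar{\phi}^{-1}$ is a Freiman $s$-homomorphism, completing the verification.

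The only point requiring a little care is the mismatch between $R_s$, which contains the individual length-$s$ relation vectors, and $\la R_s(A)\ra$, the $F$-submodule by which we quotient: checking that $\bar{\phi}$ is a homomorphism uses membership in the larger set $\la R_s(A)\ra$, which is automatic since it sits inside $\ker(\phi)$, while checking that $\bar{\phi}^{-1}$ is a homomorphism only ever needs membership in $R_s(A)$ itself, which is immediate from the definition of $R_s$. Thus the asymmetry is harmless and in fact works in our favour in both directions, so no genuine obstacle arises; the lemma is really the assertion that the construction of $A_{r,s}$ is the universal way to kill exactly the order-$s$ relations of $A$.
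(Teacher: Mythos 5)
Your proof is correct and follows essentially the same route as the paper's: the forward direction uses that $\bar{\phi}$ is induced by the group homomorphism $\phi$ factoring through $F^{k_1}/\la R_s(A)\ra$, and the inverse direction rests on the identity $R_s\cap\ker(\phi)=R_s(A)$, exactly as in the paper. The only difference is that you spell out well-definedness and bijectivity (which the paper dismisses as evident), which is a harmless elaboration rather than a new idea.
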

\begin{proof} Since $\bar{\phi}$ is a restriction of group
homomorphism, it follows that it is a Freiman
$s$-homomorphism. Moreover it is evident that $\bar{\phi}$ is a
bijective map. To prove that $\bar{\phi}$ is a Freiman
$s$-isomorphism we need to show that
\begin{equation}\label{image} \bar{\phi}(\bar{e_{i_1}}) + \ldots +
\bar{\phi}(\bar{e_{i_s}}) -  \bar{\phi}\bar{(e_{j_1}}) -
\ldots - \bar{\phi}(\bar{e_{j_s}}) = 0
\end{equation} implies that
\begin{equation}\label{domain}\bar{ e_{i_1}} + \ldots + \bar{e_{i_s}} -  \bar{e_{j_1}} - \ldots
-\bar{e_{j_s}} = 0.
\end{equation} 
From~\eqref{image}, it follows that $ e_{i_1} + \ldots + e_{i_s} -  e_{j_1} - \ldots
-e_{j_s} \in \ker(\phi) \cap R_s = R_s(A).$ Therefore it follows that~\eqref{domain} holds. Hence the lemma follows.
\end{proof}
\subsection{Number of sets in a given Freiman $2$-isomorphism class}
Given any $F$-modules $H$, $H'$ and a subset $B$ of $H'$, we write $Hom_s(B, H)$ to denote the space of
Freiman $s$-isomorphism from $B$ into $H$.  We also write
$Hom_F(\langle B \rangle, H)$ to denote the space of $F$-linear map
from $\la B\ra$ into $H$.  Notice that $Hom_s(B, H)$ and
$Hom_F(\la B\ra, H)$ are $F$-modules.
\begin{lem}\label{extension} Let $H$ be a $F$ module. Then any $g \in
Hom_s(A_{r,s}, H)$ extends as a $F$-linear map $\wt{g}: \la A_{r,s}\ra
\to H$. The map thus obtained from $Hom_s(A_{r,s}, H)$ to $Hom_F(\la
A_{r,s}\ra, H)$ is an isomorphism of modules.
\end{lem}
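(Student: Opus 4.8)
The plan is to construct the extension directly and then verify it is a well-defined isomorphism. Recall that $A_{r,s}$ is the image of the canonical basis $\{e_1,\dots,e_{k_1}\}$ under the projection $\pi: F^{k_1}\to F^{k_1}/\la R_s(A)\ra$, so $\la A_{r,s}\ra = F^{k_1}/\la R_s(A)\ra$ as an $F$-module. Given $g\in Hom_s(A_{r,s}, H)$, first I would lift it to the free module: define the $F$-linear map $G:F^{k_1}\to H$ by $G(e_i) = g(\bar e_i)$, which exists and is unique since $F^{k_1}$ is free on $e_1,\dots,e_{k_1}$. The key claim is that $G$ kills $\la R_s(A)\ra$, and hence descends to an $F$-linear map $\wt g:\la A_{r,s}\ra = F^{k_1}/\la R_s(A)\ra \to H$ with $\wt g(\bar e_i) = g(\bar e_i)$, i.e. $\wt g$ restricts to $g$ on $A_{r,s}$. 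To see $G$ vanishes on $\la R_s(A)\ra$ it suffices to show $G$ vanishes on $R_s(A)$ itself. An element of $R_s(A)$ has the shape $\rho = e_{i_1}+\cdots+e_{i_s} - e_{j_1}-\cdots-e_{j_s}$ with $\rho\in\ker\phi$, i.e. $a_{i_1}+\cdots+a_{i_s} = a_{j_1}+\cdots+a_{j_s}$ in $M$. Pushing this relation through $\pi$ gives $\bar e_{i_1}+\cdots+\bar e_{i_s} = \bar e_{j_1}+\cdots+\bar e_{j_s}$ in $A_{r,s}$ — this is exactly a relation of the form~\eqref{rel-domain} among elements of $A_{r,s}$ — so since $g$ is a Freiman $s$-homomorphism, \eqref{rel-range} gives $g(\bar e_{i_1})+\cdots+g(\bar e_{i_s}) = g(\bar e_{j_1})+\cdots+g(\bar e_{j_s})$, which says precisely $G(\rho)=0$. (One should note that $\bar e_{i_1},\dots$ are genuinely elements of $A_{r,s}$, with repetitions allowed in the sum, which is permitted in the definition of Freiman $s$-homomorphism since the $i$'s and $j$'s need not be distinct; this is why the definition of $R_s$ is stated with repetitions allowed.)

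Next I would check the map $\Psi: Hom_s(A_{r,s},H)\to Hom_F(\la A_{r,s}\ra, H)$, $g\mapsto\wt g$, is an $F$-module homomorphism: uniqueness of the lift $G$ (freeness) and uniqueness of the descent $\wt g$ (surjectivity of $\pi$) make $g\mapsto G\mapsto\wt g$ a composite of $F$-linear assignments, so $\Psi$ is $F$-linear. For injectivity, if $\wt g = 0$ then $g = \wt g|_{A_{r,s}} = 0$. For surjectivity, given any $F$-linear $L\in Hom_F(\la A_{r,s}\ra, H)$, its restriction $L|_{A_{r,s}}$ is a Freiman $s$-homomorphism (restriction of a group/module homomorphism always is, as already noted in the proof of Lemma~\ref{Ars}), and the uniqueness statement shows $\Psi(L|_{A_{r,s}}) = L$ because both are $F$-linear maps on $\la A_{r,s}\ra$ agreeing on the generating set $A_{r,s}$. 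Hence $\Psi$ is a bijective $F$-linear map, i.e. an isomorphism of modules.

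I do not expect any serious obstacle here; the lemma is essentially a formal consequence of the construction of $A_{r,s}$ as $F^{k_1}/\la R_s(A)\ra$, which was precisely engineered (via the defining property of $R_s(A)$) so that Freiman $s$-relations on $A_{r,s}$ are exactly the module relations. The one point requiring a little care is the bookkeeping that the relation extracted from $\rho\in\ker\phi$ is genuinely one of the relations~\eqref{rel-domain} that a Freiman $s$-homomorphism must respect — in particular that the definition permits repeated summands — and symmetrically that any module relation among the $\bar e_i$ pulls back into $\la R_s(A)\ra$, which is exactly the content of Lemma~\ref{Ars} and need not be reproven. Everything else is the standard universal-property argument for maps out of a quotient of a free module.
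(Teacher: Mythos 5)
Your proof is correct and follows essentially the same route as the paper: lift $g$ to the free module $F^{k_1}$, use the Freiman $s$-homomorphism property to show the lift kills $R_s(A)$ (hence $\la R_s(A)\ra$), descend to the quotient $\la A_{r,s}\ra=F^{k_1}/\la R_s(A)\ra$, and get injectivity by restriction and surjectivity because the restriction of any $F$-linear map to $A_{r,s}$ is a Freiman $s$-homomorphism. Your added bookkeeping (that $\pi(\rho)=0$ for $\rho\in R_s(A)$ gives exactly a relation of the form~\eqref{rel-domain}, with repetitions allowed) only makes explicit what the paper leaves implicit.
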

\begin{proof} Let $g \in Hom_s(A_{r,s}, H)$. Since $F^{k_1}$ is a free
module and $e_i$'s are canonical basis of $F^{k_1}$ we have the
following $F$-linear map $g': F^{k_1} \to H$ with $g'(e_1) =
g(\bar{e_1})$. Let $x \in R_s(A)$, then $x =
e_{i_1}+e_{i_2}+\ldots+e_{i_s} -
e_{j_1}-e_{j_2}-\ldots,-e_{j_s}$. Then from the definition of $g'$ and
the fact that $g$ is a Freiman $s$-homomorphism, it follows that
$R_s(A) \subset \ker(g')$, implying that $\la R_s(A)\ra \subset
\ker(g')$. Therefore we have the $F$-linear map $\wt{g}:F^{k_1}/\la
R_s(A)\ra \to H$ with $\wt{g}(\bar{e_i}) = g(\bar{e_i})$. Since $\la
A_{r,s}\ra = F^{k_1}/\la R_s(A)\ra$, the map $\wt{g}$ is an extension
of $g$. Therefore we have a $F$-linear map $f: Hom_s(A_{r,s},H) \to
Hom_F(\la A_{r,s}\ra,H)$ with $f(g) = \wt{g}$ for any $g \in
Hom_s(A_{r,s}, H)$. It is evident that $f$ is injective. Moreover $f$
is surjective, since the restriction of any map in $Hom_F(\la
A_{r,s}\ra, H)$ to $A_{r,s}$ is a Freiman $s$-homomorphism. Thus $f$
is an isomorphism of modules.
\end{proof} 
%%%%%%
\begin{comment}
NOTATIONS NEEDED IF WE WANT TO GIVE PROOF OF THE BOUND OF RANK OF A SUBSET OF
GENERAL FINITE ABELIAN GROUP.
If $H$ is any abelian group and if $X \subset H$ then we
write $kX$ for the $k$-fold sum $X+X+ \cdots +X$ and $kX-lX$ for the
set $X+X+\cdots + X - X-\cdots -X$, where there are $k-1$ plus signs
nd $l$ difference signs. The following inequality of Pl\"{u}nnecke
which was reproved by Ruzsa~\cite{Plunnecke}, is used in proving
Lemma~\ref{bgen}.
\begin{thm}{\em({\bf Pl\"{u}nnecke-Ruzsa})} Let $H$ be an abelian group and $B
  \subset G$ \label{Plunnecke}
  with $\card(A + B) \leq c\card(A)$. Then for any positive integer ]$k$ we 
have
$$
\card(kB) \leq c^{k}\card(A).
$$
\end{thm}
The arguments
we use in proving the next lemma are exactly identical to those used in~\cite[Lemma
25]{Gclique} and for this reason we just give a sketch of the proof.
\end{comment}
%%%%%%%%
\begin{lem}\cite[Lemma 25]{Gclique}\label{bgen} 
Let $H$ be a $F$-module. Then for any finite subset $B$ of $H$, there exists a subset
$X$ of $B$  with $\card(X) \leq \frac{4k_2 \log{k_1}}{k_1}$,
where $k_1 = \card(B)$ and $k_2$ is equal to $ \min\left(\card(B \hp B), \card(B-B)\right)$,
such that $\la X\ra = \la B\ra.$ 
\end{lem}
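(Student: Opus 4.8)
The plan is to construct $X$ greedily, adding one generator at a time, and to show that the ``captured'' part $B\cap\la X\ra$ of $B$ grows geometrically, the rate being governed by the smallness of $k_2$. Pick $x_1\in B$ arbitrarily and set $X_1=\{x_1\}$. Having built $X_i=\{x_1,\dots,x_i\}$, stop if $B\subseteq\la X_i\ra$; otherwise partition $B$ according to the cosets of the submodule $\la X_i\ra$ that it meets, say $c_0=\la X_i\ra,c_1,\dots,c_{N-1}$ (there are finitely many such cosets, $B$ being finite --- this is the only spot where the case $F=\Q$ needs a word), put $s_i=\card(B\cap\la X_i\ra)$, choose $x_{i+1}$ from the nonzero coset $c_j$ containing the most elements of $B$, and set $X_{i+1}=X_i\cup\{x_{i+1}\}$. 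Then $\la X_{i+1}\ra\supseteq\la X_i\ra$ and $x_{i+1}\in\la X_{i+1}\ra\cap c_j$, so $\la X_{i+1}\ra$ contains the whole coset $c_j$, giving $s_{i+1}\ge s_i+\card(B\cap c_j)$. When the process halts --- which it must, $s_i$ being a strictly increasing integer bounded by $k_1$ --- at step $t$ say, we have $B\subseteq\la X_t\ra$, hence $\la B\ra\subseteq\la X_t\ra\subseteq\la B\ra$, so $\la X_t\ra=\la B\ra$; everything reduces to bounding $t$.

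The key input is the inequality $(N-1)\,s_i\le k_2$. To see it, fix for each $j\ge1$ a representative $b_j'\in B\cap c_j$. The $s_i$ elements $b+b_j'$ with $b\in B\cap\la X_i\ra$ are pairwise distinct, all lie in the coset $c_j$, and all belong to $B\hp B$ (since $b$ and $b_j'$ lie in different cosets they are distinct, so $b+b_j'$ is a sum of two distinct elements of $B$); similarly the $s_i$ elements $b-b_j'$ are distinct, lie in $-c_j$, and belong to $B-B$. As the cosets $c_1,\dots,c_{N-1}$ are distinct, and so are $-c_1,\dots,-c_{N-1}$, summing over $j$ gives $(N-1)s_i\le\card(B\hp B)$ and $(N-1)s_i\le\card(B-B)$, hence $(N-1)s_i\le k_2$. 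It follows that the $k_1-s_i$ elements of $B$ lying outside $\la X_i\ra$ are distributed among at most $k_2/s_i$ nonzero cosets, so the densest of these --- the one from which $x_{i+1}$ was taken --- contains at least $(k_1-s_i)s_i/k_2$ of them, and we obtain the recursion
\[
s_{i+1}\ \ge\ s_i\Bigl(1+\frac{k_1-s_i}{k_2}\Bigr),\qquad s_1\ge1.
\]

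What remains is to verify that this recursion forces $s_i$ to reach $k_1$ within $t\le\dfrac{4k_2\log k_1}{k_1}$ steps, and this quantitative bookkeeping --- in particular, pinning down the constant $4$ --- is the only real obstacle. One has $\card(B-B)\ge k_1$ and $\card(B\hp B)\ge k_1-1$ (translate $B$), so $k_2\ge k_1-1$ and $\rho:=k_1/(2k_2)\in(0,1]$. A crude two-phase estimate already gives the right shape: while $s_i\le k_1/2$ the bracket is $\ge1+\rho$, so $s_i$ multiplies up past $k_1/2$ within $O(\rho^{-1}\log k_1)=O(k_2k_1^{-1}\log k_1)$ steps (using $\log(1+x)\ge x\log 2$ on $[0,1]$), and once $s_i>k_1/2$ the positive integer $k_1-s_i$ contracts by a factor $\le1-\rho$ per step and vanishes within a further $O(k_2k_1^{-1}\log k_1)$ steps. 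To extract the constant $4$ it is cleanest to track the potential $\phi_i=\log\dfrac{s_i}{k_1-s_i}$: the recursion, together with $k_1-s_i\le k_2$, yields $\phi_{i+1}-\phi_i\ge k_1/(2k_2)$ at each step with $s_i<k_1$, while $\phi$ increases by less than $2\log k_1$ in total before $s_i$ hits $k_1$ (because $1\le s_i\le k_1-1$ until the last step), so $t\le\dfrac{4k_2\log k_1}{k_1}$ up to an additive absolute constant; that constant is harmless once $k_1$ exceeds a fixed bound, and for small $k_1$ one simply takes $X=B$. I expect the constant to come out exactly as in Green's original handling of this lemma; conceptually the whole argument rests on the coset-counting identity $(N-1)s_i\le k_2$, the rest being elementary.
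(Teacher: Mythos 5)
Your argument is correct in its main thrust but takes a genuinely different route from the paper's. The paper (following Green's Lemma 25) argues via Pl\"unnecke--Ruzsa: from $\card(B+B)\le\card(B\hp B)+\card(B)$ one bounds $\card(lB)$ by roughly $((k_1+k_2)/k_1)^l$, then one takes $l=[\log_e k_1]$ and a \emph{maximal} subset $X\subset B$ whose increasing $l$-fold sums are all distinct; maximality forces $B\subset lX-(l-1)X$, hence $\la X\ra=\la B\ra$, and the size bound comes from $\binom{\card(X)}{l}\le\card(lB)$. You instead grow $\la X\ra$ greedily, and your key inequality $(N-1)s_i\le k_2$ (proved correctly, and symmetrically for $B\hp B$ via $b+b_j'$ and for $B-B$ via $b-b_j'$, the sums/differences landing in distinct cosets) yields the recursion $s_{i+1}\ge s_i\bigl(1+(k_1-s_i)/k_2\bigr)$, which a potential-function computation turns into the desired size bound. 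What your approach buys: it is entirely elementary (no Pl\"unnecke--Ruzsa), it treats the $B\hp B$ and $B-B$ cases in one stroke, and it makes transparent why $\min(\card(B\hp B),\card(B-B))$ is the right parameter. What the paper's approach buys: the maximal-distinct-sums set delivers the stated constant directly from one binomial inequality, with no step-by-step bookkeeping.

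The one loose point is your dismissal of the additive constant. As written, the potential argument gives $t\le\frac{4k_2\log k_1}{k_1}+O(1)$, and since $k_2$ can be of order $k_1$, the additive constant is \emph{not} automatically absorbed into $\frac{4k_2\log k_1}{k_1}$ for large $k_1$; you need the strictly better per-step gain that is in fact available (e.g.\ $-\log(1-s_i/k_2)\ge s_i/k_2$ and $\log(1+d_i/k_2)\ge(\log 2)\,d_i/k_2$ give a gain noticeably above $k_1/(2k_2)$ once $s_i\ge k_1-s_i$), which leaves room to swallow the constant for all $k_1$ beyond an absolute bound. Likewise, the fallback ``take $X=B$ for small $k_1$'' does not by itself meet the stated numerical bound (with natural logarithms it fails already at $k_1=2$, $k_2=1$, e.g.\ $B=\{e_1,e_2\}\subset(\Z/2\Z)^2$, where in fact the lemma as literally stated has no valid $X$). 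Since the paper's own proof is a sketch deferring to Green and the application only uses the bound up to absolute constants for large $k_1$, this is a cosmetic rather than structural defect, but the absorption of the constant should be carried out rather than waved away.
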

\begin{proof}
For any positive integer $l$, let $lB$ denotes the subset of $H$ consisting of
those elements which can be written as a sum of $l$ elements of $H$.
Since $\card(B + B) \leq \card(B\hp B) + \card(B)$, 
using Pl\"{u}nnecke-Ruzsa inequality, we verify that for any positive integer $l$, we have $$\card(lB) \leq 
\left(\frac{k_2+k_1}{k_1}\right)^l.$$
Let $\prec$ be an arbitrary ordering on $H$.
Choose  a subset $X$ of $B$ with the property that the sums $x_1+x_2+\cdots+x_l
(x_1\prec x_2 \prec\cdots \prec x_l)$ are all distinct, with 
$l = [\log_e{k_1}]$, and which is maximal 
with respect to this property. It follows from the definition of $X$ that $B
\subset hX -(h-1)X$ and thus $\la X\ra =\la B\ra$.
Moreover from the definition of $X$ we also have
 $\binom{\card(X)}{l}$ is at most
$\card(lB)$. Using this we verify that $\card(X)
\leq \frac{4k_2\log k_1}{k_1}$. Hence the lemma follows.
\end{proof}

%%%
\begin{comment}
\begin{lem}
With the notations as above, we have $r(\la A_{r,s}\ra) \leq \frac{4k_2 \log{k_1}}{k_1}$.
\end{lem}
\begin{proof}
Since $A_{r,s}$ is Freiman $s$-isomorphic to $A$, we have that $\card(A_{r,s}) =
k_1$ and $\card(A_{r,s}\hp A_{r,s}) = k_2$.
Using  Lemma~\ref{bgen} we obtain that the module $\la A_{r,s}\ra$ is spanned by a
subset $X$ of cardinality at most $\frac{4k_2 \log{k_1}}{k_1}$. Therefore we
obtain a surjective homomorphism from $F^{\card(X)}$ to $\la A_{r,s}\ra$, induced
by a surjective map from the canonical basis of $ F^{\card(X)}$ to the set
$X$. Hence the lemma follows.
\end{proof}
\end{comment}
%%%
\begin{prop}\label{1gen}
Let $M$ be a $F$-module and $C$ is a finite subset of $M$. For any finite subset $A$  of $M$, 
the number of subsets of $C$ which are Freiman $2$-isomorphic to $A$ is at
most $\card(C)^{\frac{4k_2 \log{k_1}}{k_1}}$, where $k_1$ is equal to $\card(A)$ and
$k_2$ is equal to $\min\left(\card(A \hp A), \card(A-A)\right).$
\end{prop}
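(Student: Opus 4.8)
The plan is to bound the number of subsets of $C$ Freiman $2$-isomorphic to a fixed $A$ by first choosing the images of a small generating set and then observing that these images determine the whole isomorphic copy. Concretely, fix $A \subset M$ with $\card(A)=k_1$ and $k_2 = \min(\card(A\hp A),\card(A-A))$. By Lemma~\ref{bgen} applied to $B = A$, there is a subset $X \subset A$ with $\card(X) \leq \frac{4k_2\log k_1}{k_1}$ and $\la X\ra = \la A\ra$. Now suppose $A' \subset C$ and $\psi: A \to A'$ is a Freiman $2$-isomorphism. The image $\psi(X)$ is a subset of $C$ of size at most $\card(X)$, so there are at most $\card(C)^{\card(X)} \leq \card(C)^{4k_2\log k_1/k_1}$ choices for the tuple $(\psi(x))_{x\in X}$ (indexing $X$ in a fixed way; an injective choice, but bounding by all functions only costs us the same quantity). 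The key point is then to show that the pair $(A', \psi)$—and in particular the set $A'$ itself—is completely determined by the data $(\psi(x))_{x\in X}$.

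For this I would argue as follows. Apply Lemma~\ref{Ars} with $s = 2$: the model set $A_{r,2} \subset F^{k_1}/\la R_2(A)\ra$ is Freiman $2$-isomorphic to $A$ via $\bar\phi$, so composing, $\psi \circ \bar\phi : A_{r,2} \to A'$ is a Freiman $2$-homomorphism (indeed isomorphism onto its image). By Lemma~\ref{extension} this extends uniquely to an $F$-linear map $\wt{g}: \la A_{r,2}\ra \to M$, and $\la A_{r,2}\ra$ is generated by the images $\bar{e_i}$ of the canonical basis. Since $\bar\phi$ carries the generating set $X \subset A$ back to a subset $\bar{X} \subset A_{r,2}$ with $\la \bar X\ra = \la A_{r,2}\ra$ (Freiman isomorphisms preserve the relations defining $R_2$, hence the generated submodule), the $F$-linear map $\wt g$ is determined on all of $\la A_{r,2}\ra$ by its values on $\bar X$, i.e. by $\psi(X)$. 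In particular $A' = \wt g(A_{r,2})$ is determined by $\psi(X)$. Therefore the number of such $A'$ is at most the number of choices of $(\psi(x))_{x\in X}$, which is at most $\card(C)^{4k_2\log k_1/k_1}$, as claimed.

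I expect the main technical care to be in the clean statement that ``$\psi(X)$ determines $A'$'': one must be slightly careful that a Freiman isomorphism $\psi: A \to A'$ really does factor through the $F$-linear extension machinery even when $F = \Q$ and $\la A\ra$ is strictly larger than the subgroup generated by $A$—but this is exactly what Lemmas~\ref{Ars} and~\ref{extension} are designed to give, since $A_{r,2}$ has the universal property that every Freiman $2$-homomorphism out of it is the restriction of an $F$-linear map. A minor point is matching up $\la X\ra = \la A\ra$ (a statement about $A$ in $M$) with the corresponding statement $\la \bar X\ra = \la A_{r,2}\ra$ in the model group; this follows because $\bar\phi^{-1}$ is a Freiman $2$-isomorphism and the property ``$b \in hX - (h-1)X$'' used in the proof of Lemma~\ref{bgen} is preserved under Freiman $2$-isomorphisms (for each fixed $h$, the relation is a linear identity in $2h-1$ terms, handled by iterating the Freiman $2$-homomorphism property). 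Once these bookkeeping points are in place, the counting bound is immediate.
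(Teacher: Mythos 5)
Your overall skeleton is the same as the paper's (pass to the universal model $A_{r,2}$, use Lemma~\ref{extension} to linearize, and count linear maps by their values on a small generating set), but there is a genuine gap in the step where you transfer the generating set. You apply Lemma~\ref{bgen} to $A$ inside $M$, getting $X\subset A$ with $\la X\ra=\la A\ra$, and then claim that $\bar X=\bar\phi^{-1}(X)$ satisfies $\la\bar X\ra=\la A_{r,2}\ra$. That implication is false in general: generation of $\la A\ra$ by $X$ in $M$ rests on linear relations of unbounded length among elements of $A$, and $\la A_{r,2}\ra=F^{k_1}/\la R_2(A)\ra$ only imposes the relations with \emph{two} terms on each side. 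Your justification --- that the relation $b\in hX-(h-1)X$ from the proof of Lemma~\ref{bgen} is ``preserved by iterating the Freiman $2$-homomorphism property'' --- does not work: a Freiman $2$-homomorphism says nothing about partial sums such as $x_1+x_2$, which do not lie in the domain $A$, so there is nothing to iterate (this is exactly why Lemma~\ref{descent} has to trade a $6$-isomorphism of $X$ for only a $3$-isomorphism of $X\hp X$). A concrete counterexample to the statement you need: take $F=M=\Q$ and $A=\{0,1,\dots,k-1\}$. Then $X=\{1\}$ satisfies $\la X\ra=\la A\ra=\Q$ and the cardinality bound of Lemma~\ref{bgen}, but $\la A_{r,2}\ra$ has rank $2$, so $\la\bar X\ra$ is a proper submodule; correspondingly, a Freiman $2$-isomorphic copy $A'\subset C$ of this progression is certainly not determined by the single value $\psi(1)$, so your final determination step collapses for such an $X$.

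The repair is exactly what the paper does: apply Lemma~\ref{bgen} not to $A$ but to $B=A_{r,2}$ inside its own module. This is legitimate because $A_{r,2}$ is Freiman $2$-isomorphic to $A$, hence $\card(A_{r,2})=k_1$ and $\card(A_{r,2}\hp A_{r,2})=\card(A\hp A)$, $\card(A_{r,2}-A_{r,2})=\card(A-A)$, so the same bound $\card(X)\leq \frac{4k_2\log k_1}{k_1}$ holds for a set $X\subset A_{r,2}$ with $\la X\ra=\la A_{r,2}\ra$. Then every subset of $C$ Freiman $2$-isomorphic to $A$ arises from some $g'\in Hom_2(A_{r,2},\la C\ra)$ with $g'(A_{r,2})\subset C$, whose linear extension $\wt g$ (Lemma~\ref{extension}) is determined by its values on $X$, and those values lie in $C$; this gives the bound $\card(C)^{\frac{4k_2\log k_1}{k_1}}$ directly, with no transfer of generating sets between $M$ and the model group.
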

\begin{proof}
The number of subsets of $C$ which are Freiman $2$-isomorphic to $A$ is at
most the number of $g$ in $Hom_2(A, \la C\ra)$ with $g(A) \subset C$. Since $A$ and
$A_{r,2}$ are Freiman $2$-isomorphic, this number is at most the number of
$g'$ in $Hom_2(A_{r,2}, \la C\ra)$ with $g'(A_{r,2}) \subset C$. Using
Lemma~\ref{extension}, this is at most the number of $F$-linear map $\wt{g}$
in $Hom_F(\la A_{r,2}\ra,\la C\ra)$ with $\wt{g}(A_{r,2}) \subset C$. Using Lemma~\ref{bgen}, we
have that the module $\la A_{r,2}\ra$ is spanned by a subset $X$ of $A_{r,2}$ with
$\card(X) \leq \frac{4k_2 \log{k_1}}{k_1}$. Since $\wt{g}$
is uniquely determined by its value on $X$, the number of such $\wt{g}$ is
at most $\card(C)^{\frac{4k_2 \log{k_1}}{k_1}}$. Hence the proposition follows.
\end{proof}
\subsection{Number of Freiman isomorphism classes}\label{classes}
We set $g(F)$ to be equal to $1$ in case $F$ is a field and to be equal to the
number of distinct prime divisors of $m$, when $F=\zm$.
We shall need the following lemma.
\begin{lem}\label{gf}
For any subset $R$ of $F^k$, there exists a subset $R_0$ of $R$ with
$\card(R_0)\leq g(F)k$ such that $\la R_0\ra=\la R\ra.$
\end{lem}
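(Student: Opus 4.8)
The statement to prove is Lemma~\ref{gf}: for any subset $R$ of $F^k$ (with $F$ being $\zm$ or $\Q$), there is a subset $R_0 \subseteq R$ with $\card(R_0) \leq g(F) k$ and $\la R_0\ra = \la R\ra$. When $F$ is a field this is just the familiar fact that a spanning set contains a basis, and a basis of a submodule of $F^k$ has at most $k$ elements, so $g(F)=1$ works. The content is therefore in the case $F = \zm$, where $\la R\ra$ is a finite abelian group and need not be free, so we cannot simply extract a basis. The plan is to reduce to the field case prime by prime.

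First I would write $m = p_1^{a_1}\cdots p_t^{a_t}$, so $g(F) = t$. I would process the primes one at a time and build up $R_0$ in stages $R_0^{(0)} = \emptyset \subseteq R_0^{(1)} \subseteq \cdots \subseteq R_0^{(t)} = R_0$, maintaining the invariant that, after stage $j$, the submodule $\la R_0^{(j)}\ra$ agrees with $\la R\ra$ "at the primes $p_1,\dots,p_j$," i.e. the index $[\la R\ra : \la R_0^{(j)}\ra]$ is divisible by none of $p_1,\dots,p_j$ (equivalently, $\la R_0^{(j)}\ra$ contains the $p_i$-primary part of $\la R\ra$ for each $i \le j$). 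Concretely, at stage $j$ I look at the $\zp[p_j]$-vector space $V_j := \la R\ra / p_j\la R\ra$, or better the image of $\la R\ra$ in $(F^k)/p_j(F^k) \cong (\zz/p_j\zz)^k$; this is a vector space of dimension at most $k$ over $\zz/p_j\zz$. The reductions of the elements of $R$ span this space, so I can pick at most $k$ elements of $R$ whose reductions span it. Add those to $R_0^{(j-1)}$ to form $R_0^{(j)}$. By Nakayama's lemma (applied to the localization at $p_j$, or directly: if a submodule $N \subseteq \la R\ra$ satisfies $N + p_j\la R\ra = \la R\ra$ then the $p_j$-part of $N$ equals the $p_j$-part of $\la R\ra$), this guarantees the $p_j$-primary part is captured, and adding elements never loses ground at the earlier primes.

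After all $t$ stages, $\card(R_0) \leq tk = g(F)k$, and $\la R_0\ra$ contains the $p_i$-primary component of $\la R\ra$ for every prime $p_i \mid m$. Since $\la R\ra$ is a finite abelian group whose exponent divides $m$, it is the internal direct sum of its $p_i$-primary components over $i = 1,\dots,t$, so $\la R_0\ra \supseteq \la R\ra$, hence they are equal, and of course $\la R_0\ra \subseteq \la R\ra$ trivially. This completes the proof. The one point requiring a little care — the "main obstacle," though it is more bookkeeping than genuine difficulty — is the passage from "$R_0^{(j)}$ spans modulo $p_j$" to "$R_0^{(j)}$ contains the full $p_j$-primary part"; the cleanest way to see this is to localize $F^k$ at $p_j$ (getting a module over the local ring $\zz_{(p_j)}/m\zz_{(p_j)}$, which is $\zz/p_j^{a_j}\zz$) and invoke Nakayama, or alternatively to argue directly by induction on $a_j$ that $N + p_j \la R\ra = \la R\ra$ forces $N + p_j^i\la R\ra = \la R\ra$ for all $i$, hence $N \supseteq$ the $p_j$-part. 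Either way the argument is short and standard, which matches the paper's remark that the lemma "follows from a standard inductive argument."
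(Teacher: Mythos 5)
Your overall plan---treat $F=\zm$ prime by prime, selecting at most $k$ elements of $R$ for each prime divisor $p_j$ of $m$ and then assembling the primary components---is viable and genuinely different from the paper's proof, which instead inducts on $k$ (handling $k=1$ by hand with one element of $R$ per prime divisor, then projecting away the first coordinate). However, the key reduction step is stated with two non-equivalent quotients, and the one you prefer (``or better, the image of $\la R\ra$ in $F^k/p_jF^k$'') breaks the argument. Spanning that image only yields $\la R_0^{(j)}\ra + \left(\la R\ra\cap p_jF^k\right)=\la R\ra$, and $\la R\ra\cap p_jF^k$ can be strictly larger than $p_j\la R\ra$, so the hypothesis of your Nakayama statement is not available. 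Concretely, take $m=p^2$, $k=2$, $R=\{(1,0),(0,p)\}\subset(\Z/p^2\Z)^2$: the image of $\la R\ra$ in $(F/pF)^2$ is spanned by the reduction of $(1,0)$ alone (the second element reduces to $0$), so your procedure may select only $(1,0)$ at the prime $p$; but $\la(1,0)\ra$ does not contain $(0,p)$, and since $\la R\ra$ is entirely $p$-primary here, the invariant ``the $p$-primary part is captured'' fails and the resulting $R_0$ does not generate $\la R\ra$. (Even $k=1$, $R=\{p\}\subset\Z/p^2\Z$ exhibits the problem: the image in $F/pF$ is zero, so nothing would be selected.)

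The first quotient you mention, $V_j=\la R\ra/p_j\la R\ra$, is the correct one: spanning it gives $\la R_0^{(j)}\ra+p_j\la R\ra=\la R\ra$, and then localization at $p_j$ together with Nakayama (or your direct iteration $\la R_0^{(j)}\ra+p_j^i\la R\ra=\la R\ra$) shows that the $p_j$-primary part of $\la R\ra$ is contained in $\la R_0^{(j)}\ra$; summing the primary components over all $p_j\mid m$ then finishes as you describe. But for this choice the claim ``dimension at most $k$'' is no longer automatic (it is obvious only for the ambient quotient, which is the broken variant): you must add the standard fact that for a subgroup $M$ of $(\zm)^k$ one has $\dim_{\mathbb{F}_{p_j}}M/p_jM=\dim_{\mathbb{F}_{p_j}}M[p_j]\le k$, i.e.\ the rank of a subgroup of a finite abelian group is at most the rank of the ambient group. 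With the reduction fixed to $V_j$ and this dimension bound supplied, your prime-by-prime argument is complete and gives the same bound $g(F)k$; compared with the paper's elementary induction on $k$, it is arguably cleaner but relies on the structure theory (Nakayama, primary decomposition) that the paper avoids.
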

\begin{proof}
When $F$ is a field, the dimension of the subspace $\la R\ra$ of $F^k$
is at most $k$ and there exists a
subset $R_0$ of $R$ which forms a basis of the vector space $\la R\ra$. Thus the
lemma follows in this case.

\vspace{2mm}
\noindent
 Now we need to prove the lemma in case when 
$F=\zm$. In this case we shall prove the lemma by an induction on $k$.

\vspace{2mm}
\noindent
 We
first prove the lemma in case $k=1$. In this case $\la R\ra$ is equal to a
subgroup of $\zm$. Let $p:\Z \to \zm$ be the natural projection map and for
any $x\in\zm$, we write $\wt{x}$ to denote the integer in $[0,m-1]$ with $p(\wt{x})=x$.
 
\vspace{2mm}
\noindent
If the order of $\la R\ra$ is $d$, then $p^{-1}(\la R\ra) = \frac{m}{d}\Z.
$ Thus for any prime divisor $p$ of $m$,
 there exists $r_p \in R$ such that $\wt{r_p} = \frac{m}{d}\wt{r'_p}$ with $p$ not
 dividing $\wt{r'_p}$. Let $R_0 =\{r_p\}_{p|m}$. We claim that $\la R_0\ra =
 \la R\ra$. 

\vspace{2mm}
\noindent
Suppose the claim is not true. Then $\la R_0\ra$ is a proper subgroup of
 $\la R\ra$ and there exists a positive integer $d'$ which divides $m$ such that
 $p^{-1}(\la R_0\ra)$ consists of those integers which are divisible by
 $\frac{m}{d}d'$. But by construction of $R_0$ we have that for any prime
 $p|d'$ we verify that $\wt{r_p}$ is not divisible by $\frac{m}{d}d'$. This
 contradiction proves the claim and $\la R_0\ra = \la R\ra$. 
Moreover by the construction of the $R_0$, we have $\card(R_0) \leq \omega(m).$
Hence the lemma follows in case $k=1$.

\vspace{2mm}
\noindent
Now suppose the lemma is true for any $k \leq l-1$
 with $l\geq 2$. We shall show that the lemma holds for $k=l$. Let
 $\pi_1:F^l\to F$ be the projection map on the first co-ordinate. Then
 $\pi_1(\la R\ra)$ is the module of $F$ and using the fact that the lemma holds for $k=1$, it
 follows that there exist $R'_0 \subset R$ with $\card(R'_0) \leq g(F)$ such that 
$\pi_1(\la R'_0\ra) = \pi_1(\la R\ra)$. Thus for any $r\in R$, there exist $r_1 \in
\la R'_0\ra$ such that $\pi(r-r_1)=0$. Let $R'' = \{r-r_1: r\in R\}$. Then $R''
\subset F^{l-1}$ and by the induction hypothesis there exist a subset $R''_0$
of $R''$ such that $\card(R''_0) \leq g(F)(k-1)$ and $\la R''\ra = \la R''_0\ra$.
Let $R_0 =R'_0\cup R''_0$. Since $\la R\ra = \la R''\ra + \la R'_0\ra$, it follows
that $\la R_0\ra = \la R\ra $. 
Moreover we have that $\card(R_0) \leq \card(R'_0) + \card(R''_0) \leq
g(F)k$. Hence the lemma follows.
\end{proof}
The following lemma is a generalisation of~\cite[Lemma 11]{Gclique}.
\begin{lem}\label{dependence}
Let $H$ be an $F$-module. Then the number of Freiman $s$-isomorphism classes
of subsets of $H$ of the
cardinality $k$ is at most $k^{2sg(F)k}$.
\end{lem}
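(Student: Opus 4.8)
The plan is to reduce the count to a count of submodules of $F^k$, using the universal model $A_{r,s}$ supplied by Lemma~\ref{Ars}. Fix $k$ and let $A\subseteq H$ with $\card(A)=k$. Choosing an enumeration $A=\{a_1,\dots,a_k\}$ and the induced $F$-linear surjection $\phi\colon F^k\to\la A\ra$ with $\phi(e_i)=a_i$, Lemma~\ref{Ars} says that $A$ is Freiman $s$-isomorphic to $A_{r,s}=\{\bar e_1,\dots,\bar e_k\}\subseteq F^k/\la R_s(A)\ra$. The key observation is that the set $A_{r,s}$, hence the Freiman $s$-isomorphism class of $A$, depends on $A$ only through the submodule $V:=\la R_s(A)\ra$ of $F^k$: if $A'$ is any $k$-element subset admitting an enumeration with $\la R_s(A')\ra=V$, then $A'$ is Freiman $s$-isomorphic to the very same set $\{\bar e_1,\dots,\bar e_k\}\subseteq F^k/V$, and therefore $A$ and $A'$ are Freiman $s$-isomorphic to one another (a composite of Freiman $s$-isomorphisms, with inverse, is one). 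Consequently the number of Freiman $s$-isomorphism classes of $k$-element subsets of $H$ is at most the number of submodules of $F^k$ of the form $\la R\ra$ with $R\subseteq R_s$ (take $R=R_s(A)$).

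It remains to bound the number of such submodules. By Lemma~\ref{gf} applied to $R=R_s(A)$, there is a subset $R_0\subseteq R\subseteq R_s$ with $\card(R_0)\le g(F)k$ and $\la R_0\ra=\la R\ra$, so each submodule that occurs is generated by at most $g(F)k$ elements of $R_s$. Since the elements of $R_s$ are the $e_{i_1}+\cdots+e_{i_s}-e_{j_1}-\cdots-e_{j_s}$ with $i_t,j_t\in\{1,\dots,k\}$, we have $\card(R_s)\le k^{2s}$. Hence the number of submodules that can arise is at most the number of subsets of $R_s$ of size at most $g(F)k$, which is at most $(k^{2s})^{g(F)k}=k^{2sg(F)k}$; the case $k=1$ is trivial since all singletons form one class. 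This yields the stated bound.

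There is no real obstacle beyond the two structural inputs already used — Lemma~\ref{Ars} for the universal model and Lemma~\ref{gf} for a generating set of controlled size — together with the elementary estimate $\card(R_s)\le k^{2s}$. The only place to be slightly careful is the final counting step: one should either regard a generating set $R_0$ as an ordered $g(F)k$-tuple drawn from $R_s\cup\{0\}$ (using $\card(R_s\cup\{0\})\le k^{2s}$ for $k\ge2$), or estimate $\sum_{j\le g(F)k}\binom{\card(R_s)}{j}$ and absorb the lower-order factors, in either case arriving at $k^{2sg(F)k}$. It is worth noting that this is precisely Green's argument for~\cite[Lemma 11]{Gclique}, with the field fact that $\la R\ra$ admits a basis of size at most $k$ replaced by Lemma~\ref{gf}; this replacement is exactly what introduces the extra factor $g(F)$ in the exponent when $F=\zm$ has several prime divisors.
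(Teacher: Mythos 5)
Your proof is correct and follows essentially the same route as the paper: reduce to counting submodules of $F^k$ spanned by subsets of $R_s$ via Lemma~\ref{Ars}, then use Lemma~\ref{gf} to restrict to generating sets of size at most $g(F)k$ drawn from the at most $k^{2s}$ elements of $R_s$. Your extra care about the final counting step (ordered tuples versus the binomial sum) matches the paper's bound $\sum_{i\le g(F)k}\binom{k^{2s}}{i}\le k^{2sg(F)k}$.
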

\begin{proof}
Let $c(k)$ be the number of Freiman $s$-isomorphism classes of subsets of $H$
  of the
cardinality $k$.
From Lemma~\ref{Ars}, any subset $B$ of the cardinality $k$ is isomorphic to
  $B_{r,s}$, which is the image of canonical basis of $F^k$ under the
  projection map from $F^{k}$ to $F^{k}/\la R_s(B)\ra$ where $R_s(B)$ is a
  subset of $R$. Thus $c(k)$ is at most the number of submodules
of $F^k$ which are spanned by a subset of
$R_s$. Using Lemma~\ref{gf} any such submodule is spanned by a subset $R_0$ of
$R_s$ of cardinality at most $g(F)k$. Thus $c(k) \leq
  \sum_{i=0}^{g(F)k}\binom{k^{2s}}{i} \leq k^{2sg(F)k}$. 
\end{proof}
Using Lemma~\ref{Ars} the Freiman $s$-isomorphism class of any subset $A$ of
an $F$-module $H$ is determined by
$s$-relation satisfied by it. Using this  and  the arguments used in the proof of~\cite[Lemma 16]{Gclique},
we obtain the following result.
\begin{lem}\cite[Lemma 16]{Gclique} \label{extra}
Let $H$ be an $F$-module. Fix a non-negative integer $t$ and a subset $B$ of
$M$ with $\card(B)=l$. Then the number of mutually non-isomorphic sets $A$
with
$\card(A) = l+t$, such that there exists a subset $A_0\subset A$ satisfying
$A_0$ is Freiman $3$-isomorphic to $B$ is at most $(l^3 +1)^{t^4}$.
\end{lem}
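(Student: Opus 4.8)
The plan is to carry out the argument of~\cite[Lemma 16]{Gclique} with an arbitrary $F$-module $H$ in place of $(\zp)^r$. The two inputs that make this transfer possible are Lemma~\ref{Ars}, which encodes the Freiman isomorphism class of a $k$-element set $A=\{a_1,\ldots,a_k\}$ by its relation set $R_2(A)=R_2\cap\ker\phi$, where $\phi\colon F^k\to H$ sends $e_i$ to $a_i$; and Lemma~\ref{gf}, which replaces any set of relations by a generating subset of size at most $g(F)k$. Both hold for general $F$, so $g(F)$ will simply take the place of $1$, exactly as in Lemma~\ref{dependence}. Reading ``mutually non-isomorphic'' as Freiman $2$-isomorphic, I would proceed as follows. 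Order $A=\{a_1,\ldots,a_{l+t}\}$ so that $A_0=\{a_1,\ldots,a_l\}$, and let $\phi\colon F^{l+t}\to H$ be as above. By Lemma~\ref{Ars} the class of $A$ is determined by $R_2(A)$. The part of $R_2(A)$ supported on the first $l$ coordinates equals $R_2(A_0)$, and this --- together with the complete list of $3$-relations satisfied by $a_1,\ldots,a_l$, i.e.\ of the coincidences $a_p+a_q-a_r=a_{p'}+a_{q'}-a_{r'}$ between elements indexed by $\{1,\ldots,l\}$ --- is fixed as soon as we are told that $A_0$ lies in the prescribed Freiman $3$-isomorphism class of $B$. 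Hence it remains to bound the number of possibilities for the relations in $R_2(A)$ that involve at least one of the new coordinates $e_{l+1},\ldots,e_{l+t}$.

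For this count, observe that a relation $e_{x_1}+e_{x_2}=e_{y_1}+e_{y_2}$ in $R_2$ involving a new coordinate can be rewritten, after collecting the new-coordinate terms on one side, in the form
\[
a_{l+u_1}+a_{l+u_2}-a_{l+u_3}-a_{l+u_4}\;=\;a_p+a_q-a_r
\]
(together with the degenerate shapes in which some of these terms are absent or repeated, handled in the same way), where $u_1,\ldots,u_4\in\{1,\ldots,t\}$ and $p,q,r\in\{1,\ldots,l\}$; the bound ``three old elements on the right'' is forced by the fact that a $2$-relation has only four slots in all. Thus, once the ``new pattern'' --- the choice of the indices $u_i$ together with the accompanying sign/multiplicity data --- is fixed, the right-hand side is one of at most $l^3$ elements, specified by an ordered triple of indices in $\{1,\ldots,l\}$; the one further possibility, that no old coordinate occurs, is recorded by an extra symbol. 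So each of the at most $t^4$ new patterns is assigned a value in a set of size at most $l^3+1$, yielding at most $(l^3+1)^{t^4}$ assignments; and this data, together with $R_2(A_0)$ and the $3$-isomorphism class of $A_0$, determines $R_2(A)$ --- Lemma~\ref{gf} here playing the finitary role it plays in Lemma~\ref{dependence} --- and hence the Freiman $2$-isomorphism class of $A$. This gives the claimed bound.

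The step I expect to be the main work is making this normal form precise and verifying that it does what is claimed: that every relation in $R_2(A)$ involving a new coordinate reduces to one with at most three old indices (this is what produces the clean $l^3$ rather than a larger power of $l$), that the Freiman $3$-isomorphism class of $A_0$ --- not merely its $2$-isomorphism class --- is exactly what makes the assignment ``new pattern $\mapsto$ ordered triple of old indices'' well defined and what lets one recover all of $R_2(A)$ from the recorded data, and that the generating set of size at most $g(F)(l+t)$ supplied by Lemma~\ref{gf} is entirely accounted for by that data. Once the reduction is in place the counting is immediate, and the passage from $(\zp)^r$ to a general $F$-module brings in nothing new beyond writing $g(F)$ for $1$.
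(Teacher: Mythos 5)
Your overall strategy is the one the paper intends: the paper gives no independent proof of this lemma, only the remark that, by Lemma~\ref{Ars}, the Freiman $2$-isomorphism class of a set is determined by its relation set, so that Green's counting argument transfers verbatim to a general $F$-module. Your reduction — freeze the relations supported on the coordinates of $A_0$ (including its $3$-relations) via the prescribed $3$-isomorphism class of $B$, then bound the extra data needed to pin down the relations of $R_2(A)$ involving the new coordinates by recording one instance per ``pattern'' — is exactly that plan, and your identification of why the $3$-isomorphism class (rather than the $2$-class) of $A_0$ is what makes one recorded instance per pattern determine all remaining relations is correct and is the key point.

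The gap is in the pattern count. A pattern is an assignment of new indices, with multiplicity, to some of the four slots of a $2$-relation, and the number of such assignments is of order $(t+1)^4$, not $t^4$; the assertion ``each of the at most $t^4$ new patterns'' is unjustified and fails already for small $t$. The case that matters is $t=1$, which is precisely how the lemma is invoked in Proposition~\ref{nfc}: for a single new element $x$ there are two nontrivial patterns, namely $x = a_p+a_q-a_r$ (one new slot, at most $l^3+1$ recorded values) and $x+x = a_p+a_q$ (the new element filling two slots on the same side, at most $l^2+1$ values), and the second is not recoverable from the first together with the $3$-relations of $A_0$, since deciding whether $2x=a_p+a_q$ given $x=a_{p'}+a_{q'}-a_{r'}$ is a $4$-relation of $A_0$. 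So your bookkeeping yields a bound of the shape $(l^3+1)^{(t+1)^4}$ in general, and $(l^3+1)(l^2+1)$ for $t=1$, rather than the claimed $(l^3+1)^{t^4}$, which equals $l^3+1$ when $t=1$. This weaker bound would in fact still suffice for the paper's downstream estimates (it changes only absolute constants in Theorem~\ref{smallG}), but it does not prove the lemma as stated; recovering the stated exponent requires either Green's precise encoding or an argument that the combinations of recorded instances which are simultaneously realizable number at most $(l^3+1)^{t^4}$, which your proposal does not supply. Two minor points: your displayed ``normal form'' with four new and three old terms has seven slots and so never literally occurs in a $2$-relation (only its degenerate shapes do), and Lemma~\ref{gf} plays no role here, since the isomorphism class is recovered directly from the relation data rather than from a generating subset of the relations.
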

For any subset $A$ of an $F$-module $H$, let $A_0$ be a subset of $A$ of the
minimum possible cardinality among the subsets of $A$ satisfying the property
 that there exists  $a^* \in A$ such that $a^* + (A\setminus \{a^*\})
 \subset A_0 \hp A$. Among all the possible choices of $A_0$, we choose the one
with the minimum possible cardinality of $A_0 \hp A_0$. 
 For any positive
integers $s_1, s_2$, we define the following subset of $S(k_1,k_2,C,M)$.
\begin{equation}
S(k_1,k_2,s_1,s_2,C,M) = \{A \in S(k_1,k_2,C,M): \card(A_0) = s_1,
\card(A_0 \hp A_0) = s_2\}.\label{intros1s2}
\end{equation} 
For any $A \in S^-(k_1,k_2,C,M)$, we also choose a subset $A_0$ of $A$ which
is of the minimum possible cardinality among the subsets of $A$,
satisfying that there exist an $a^* \in A$ such that $a^* -A \subset
A_0-A_0$. Among all the possible choices of $A_0$ we choose a one with the
 cardinality of $A_0 -A_0$  minimal possible. For any positive integers $s_1$
 and $s_2$ we set
\[
S^-(k_1,k_2,s_1,s_2,C,M) = \{ A \in S^-(k_1,k_2,C,M): \card(A_0) = s_1,
\card(A_0 - A_0) = s_2\}.
\]
The following lemma is an easy exercise.
\begin{lem}\cite[Lemma 16]{Gclique}\label{descent}
Suppose that $X \cong_6 X'$. Then $X\hp X \cong_3 X'\hp X'$ and any subset
$B \subset X\hp X$ is $3$-isomorphic to a subset of $X'\hp X'$. Similarly
$X - X \cong_3 X'-X'$ and any subset $B$  of $X-X$ is Freiman $3$-isomorphic
to a subset of $X'-X$.
\end{lem}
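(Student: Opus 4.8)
The plan is to build the claimed $3$-isomorphisms explicitly from a fixed Freiman $6$-isomorphism $\phi:X\to X'$, using the fact that every element of $X\hp X$ (resp.\ $X-X$) is assembled from two elements of $X$, so that a $3$-relation in the sumset or difference set unfolds into a $6$-term relation in $X$. First I would record two elementary properties of $\phi$ that are used throughout: since $\phi$ has a two-sided inverse it is a bijection, and since it is a Freiman $6$-homomorphism it is a Freiman $t$-homomorphism for every $1\le t\le 6$ (given a $t$-relation, pad both sides with $6-t$ copies of a fixed element of $X$, apply $\phi$, and cancel); the same applies to $\phi^{-1}$.

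For the sumset, define $\psi:X\hp X\to X'\hp X'$ by $\psi(x_1+x_2)=\phi(x_1)+\phi(x_2)$ whenever $x_1,x_2\in X$ are distinct. This is well defined: if $x_1+x_2=y_1+y_2$ then $\phi(x_1)+\phi(x_2)=\phi(y_1)+\phi(y_2)$ by the Freiman $2$-homomorphism property of $\phi$; and the value lies in $X'\hp X'$ because $\phi$ is injective, so $\phi(x_1)\ne\phi(x_2)$. Applying the same recipe to $\phi^{-1}$ yields a map $X'\hp X'\to X\hp X$ that is a two-sided inverse of $\psi$, so $\psi$ is bijective. To see that $\psi$ (and, by the same argument, its inverse) is a Freiman $3$-homomorphism, take a relation $z_1+z_2+z_3=z_1'+z_2'+z_3'$ in $X\hp X$ with representations $z_i=a_i+b_i$, $z_i'=a_i'+b_i'$ by distinct elements of $X$; then $\sum_i(a_i+b_i)=\sum_i(a_i'+b_i')$ is a $6$-term relation in $X$, so applying the Freiman $6$-homomorphism property of $\phi$ and regrouping gives $\psi(z_1)+\psi(z_2)+\psi(z_3)=\psi(z_1')+\psi(z_2')+\psi(z_3')$. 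Hence $\psi$ is a Freiman $3$-isomorphism and $X\hp X\cong_3 X'\hp X'$. For arbitrary $B\subset X\hp X$, the restriction $\psi|_B$ is a Freiman $3$-homomorphism whose inverse (the restriction of $\psi^{-1}$) is also a Freiman $3$-homomorphism, so $B\cong_3\psi(B)\subset X'\hp X'$.

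The difference-set statement is handled identically: define $\chi:X-X\to X'-X'$ by $\chi(x_1-x_2)=\phi(x_1)-\phi(x_2)$. Well-definedness, injectivity of the image, and bijectivity follow as above by applying the Freiman $2$-homomorphism property to the rewriting $x_1-x_2=y_1-y_2\iff x_1+y_2=y_1+x_2$ (and its analogue for $\phi^{-1}$), and the Freiman $3$-homomorphism property of $\chi$ follows by rewriting a $3$-relation $\sum_i(a_i-b_i)=\sum_i(a_i'-b_i')$ as the $6$-term relation $\sum_i a_i+\sum_i b_i'=\sum_i a_i'+\sum_i b_i$ and applying $\phi$; restricting $\chi$ to any $B\subset X-X$ gives $B\cong_3\chi(B)\subset X'-X'$. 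I do not expect a genuine obstacle, since the lemma is essentially an exercise; the only points requiring care are the bookkeeping that pins down $6$ as the correct exponent (two elements per sum or difference, three summands, hence six), and the remark that it is the injectivity of $\phi$ that keeps the images inside $X'\hp X'$ rather than merely in $X'+X'$.
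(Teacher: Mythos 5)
Your proof is correct and is precisely the standard argument that the paper leaves as an easy exercise (citing Green's Lemma 16): define the induced map on $X\hp X$ (resp.\ $X-X$) via representatives, check well-definedness with the $2$-homomorphism property obtained by padding, and get the $3$-homomorphism property of the map and its inverse by unfolding each $3$-relation into a $6$-term relation in $X$. No gaps; your bookkeeping of why $6$ is the right exponent and why injectivity of $\phi$ keeps sums of distinct elements inside $X'\hp X'$ is exactly the content of the exercise.
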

Using Lemmas~\ref{dependence}, \ref{extra},  \ref{descent} and the argument used in the proof
of~\cite[Proposition 18]{Gclique} we obtain the following result.
\begin{prop}\label{nfc}
Let $M$ be an $F$-module. Then the number of Freiman $2$-isomorphism classes
of sets in $S(k_1,k_2,s_1,s_2,C,M)$ as well as in $S^-(k_1,k_2,s_1,s_2,C,M)$ is at most 
$(s_1)^{12g(F)s_1}\binom{s_2}{k_1-1}(k_1^3+1)$.
\end{prop}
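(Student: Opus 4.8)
The plan is to reconstruct every $A\in S(k_1,k_2,s_1,s_2,C,M)$, up to Freiman $2$-isomorphism, from a bounded amount of combinatorial data, following the proof of \cite[Proposition 18]{Gclique}. The three factors in the asserted bound will record, in order: the Freiman $6$-isomorphism class of the distinguished set $A_0$; the way the shifted copy $a^*+(A\setminus\{a^*\})$ sits inside $A_0\hp A_0$; and the position of the single extra point $a^*$. The case of $S^-(k_1,k_2,s_1,s_2,C,M)$ is identical, with $\hp$ replaced throughout by the difference operation, using the difference halves of Lemmas~\ref{descent} and~\ref{extra} and the reflection $x\mapsto -x$ in place of a translation.

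First I would bound the number of possibilities for the Freiman $6$-isomorphism class of $A_0$. Since $\card(A_0)=s_1$, Lemma~\ref{dependence} with $s=6$ and $k=s_1$ gives at most $s_1^{12 g(F) s_1}$ such classes. Fix one of them together with a representative $B$, $\card(B)=s_1$. If $A_0\cong_6 B$, then Lemma~\ref{descent} supplies a Freiman $3$-isomorphism $\psi\colon A_0\hp A_0\to B\hp B$, and since Freiman isomorphisms are bijective, $\card(B\hp B)=\card(A_0\hp A_0)=s_2$. The minimality defining $A_0$ places $a^*+(A\setminus\{a^*\})$ inside $A_0\hp A_0$, so $D:=\psi\bigl(a^*+(A\setminus\{a^*\})\bigr)$ is a subset of $B\hp B$ with $\card(D)=k_1-1$; there are at most $\binom{s_2}{k_1-1}$ choices for $D$. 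Because a translation is a Freiman isomorphism of every order, $A\setminus\{a^*\}$ is itself Freiman $3$-isomorphic to $D$. Finally Lemma~\ref{extra}, applied with its ``$B$'' taken to be $D$, with $l=k_1-1$ and $t=1$, shows that there are at most $\bigl((k_1-1)^3+1\bigr)^{1^4}\le k_1^3+1$ Freiman $2$-isomorphism classes of sets $A$ with $\card(A)=k_1$ containing a subset Freiman $3$-isomorphic to $D$. Every $A$ in $S(k_1,k_2,s_1,s_2,C,M)$ arises, in this manner, from a choice of a $6$-isomorphism class, a set $D$, and a $2$-isomorphism class lying in the corresponding fibre; multiplying the three bounds yields the asserted estimate $s_1^{12 g(F) s_1}\binom{s_2}{k_1-1}(k_1^3+1)$.

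The step requiring the most care --- and the only real obstacle --- is the bookkeeping of the isomorphism orders: one must carry the $6$-isomorphism class of $A_0$ precisely so that Lemma~\ref{descent} yields a $3$-isomorphism of the sumsets (a weaker relation would not let one transport $a^*+(A\setminus\{a^*\})$), then use the minimality of $A_0$ to see that this shifted set sits inside $A_0\hp A_0$ (after absorbing the harmless shift by $a^*$), and finally feed a $3$-isomorphic copy into Lemma~\ref{extra} so as to recover the $2$-isomorphism class of $A$ at the cost of the single adjoined point $a^*$. One also has to check that, for each fixed pair consisting of a $6$-isomorphism class of $A_0$ and a set $D$, the corresponding fibre contains at most $k_1^3+1$ Freiman $2$-isomorphism classes of $A$ --- but this is exactly the content of Lemma~\ref{extra}, so no estimate beyond Lemmas~\ref{dependence},~\ref{extra} and~\ref{descent} is needed.
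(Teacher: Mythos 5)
Your proposal is correct and follows essentially the same route as the paper's argument (which follows Green's Proposition 18): Lemma~\ref{dependence} with $s=6$ for the $6$-isomorphism class of $A_0$, Lemma~\ref{descent} to transport $a^*+(A\setminus\{a^*\})$ into a $(k_1-1)$-subset of $B\hp B$ giving the factor $\binom{s_2}{k_1-1}$, and Lemma~\ref{extra} with $t=1$ for the final factor $k_1^3+1$, with the same modifications for the difference-set case.
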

%%%%%%%%%%%
\begin{comment}
\begin{proof}
From Lemma~\ref{dependence}, the number of $6$-isomorphism classes to which
$A_0$ can belong is $M \leq (s_1)^{12s_1}$; let $X_1,\ldots,X_M$ be
representative of these classes. Using Lemma~\ref{descent}, the set $a^* +
(A\setminus\{a^*\})$ is isomorphic to a subset of some $X_i\hp X_i$ of
cardinality equal to $k_1-1$; consequently the number of $3$-isomorphism
classes of $A\setminus \{a^*\}$  can belong is at
most $M\binom{s_2}{k_1-1}$. Using Lemma~\ref{extra}, each of these
isomorphism classes of $A\setminus \{a^*\}$ gives rise to at most $(k_1^3 +1)$
$2$-isomorphism classes of $A$.  Altogether we get at most
\[
(s_1)^{12s_1}\binom{s_2}{k_1-1}(k_1^3+1)
\] 
possible isomorphism classes of $A$.
\end{proof}
\end{comment}
\noindent
Now we obtain an upper bound for the cardinality of $A_0$ for any $A\in S(k_1,k_2,C,M)$.
\begin{lem}
For any $A \in S(k_1,k_2,C,M)$, there exist $a^* \in A$, $A'_0 \subset A$ and $A_1 \subset A$
with $\card(A'_0) + \card(A\setminus A_1)\ll
(k_1k_2\log{k_1})^{1/3}$ such that $a^* + A_1\subset A'_0\hp A'_0$. Similarly
for any $A \in S^-(k_1,k_2,C,M)$, there exist $a^* \in A$, $A'_0 \subset A$ and $A_1 \subset A$
with $\card(A'_0) + \card(A\setminus A_1)\ll
(k_1k_2\log{k_1})^{1/3}$ such that $a^* - A_1\subset A'_0- A'_0$.
\end{lem}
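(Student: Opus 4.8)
The plan is to prove both statements by a probabilistic covering argument, which I describe for $S(k_1,k_2,C,M)$; the case of $S^-(k_1,k_2,C,M)$ is entirely parallel. Observe first that the ambient set $C$ plays no role, since every set produced below is a subset of $A\subseteq C$, and that if $k_2\log k_1\ge c_0 k_1^2$ for a suitably small absolute constant $c_0>0$ we may simply take $A'_0=A_1=\emptyset$ and $a^*\in A$ arbitrary: then $a^*+A_1=\emptyset=A'_0\hp A'_0$ and $\card(A'_0)+\card(A\setminus A_1)=k_1\le c_0^{-1/3}(k_1k_2\log k_1)^{1/3}$. So assume $k_2\log k_1<c_0 k_1^2$ (and $k_1$ large, the finitely many smaller cases being absorbed into the constant). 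For $c\in A\hp A$ let $\sigma(c)$ denote the number of unordered pairs $\{x,y\}\subseteq A$ with $x\ne y$ and $x+y=c$. The observation driving the argument is that, for a fixed $c$, these pairs are pairwise disjoint (if $\{x,c-x\}$ and $\{x',c-x'\}$ share an element then they coincide). Consequently, if $A'_0$ is obtained from $A$ by retaining each element independently with probability $p$, then the events ``$\{x,c-x\}\subseteq A'_0$'' over the $\sigma(c)$ representing pairs of a fixed $c$ are mutually independent, whence $\Prob(c\notin A'_0\hp A'_0)=(1-p^2)^{\sigma(c)}\le e^{-p^2\sigma(c)}$.

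Fix a threshold $t\ge 2\log k_1$, to be optimised, and call $c\in A\hp A$ \emph{popular} if $\sigma(c)\ge t$. Since $\sum_{c\in A\hp A}\sigma(c)=\binom{k_1}{2}$, there are at most $k_1^2/(2t)$ popular values; and since $\card(A\hp A)\le k_2$, we have $\sum_{c:\,\sigma(c)<t}2\sigma(c)<2tk_2$, the left side being the number of ordered pairs $(a^*,a)\in A^2$ with $a\ne a^*$ and $\sigma(a^*+a)<t$ (note $a^*+a\in A\hp A$ whenever $a\ne a^*$). Averaging over the $k_1$ choices of $a^*$ yields an $a^*$ with $\card\{a\ne a^*:\sigma(a^*+a)<t\}<2tk_2/k_1$; fix it. Put $p=\sqrt{2\log k_1/t}\ (\le 1)$. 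For the random $A'_0$ above, the probability that some popular $c$ is missed by $A'_0\hp A'_0$ is at most $(k_1^2/2t)e^{-p^2t}=1/(2t)<\tfrac12$, while $\Prob(\card(A'_0)\ge 2pk_1)\le\tfrac12$ by Markov; so we may fix a specific $A'_0$ with $\card(A'_0)<2pk_1$ that covers every popular $c$. Set $A_1=\{a\in A:a\ne a^*,\ a^*+a\in A'_0\hp A'_0\}$, so $a^*+A_1\subseteq A'_0\hp A'_0$ by construction. Every $a\ne a^*$ with $\sigma(a^*+a)\ge t$ belongs to $A_1$, since $a^*+a$ is then a popular element of $A\hp A$; hence $A\setminus A_1\subseteq\{a^*\}\cup\{a\ne a^*:\sigma(a^*+a)<t\}$ has cardinality at most $1+2tk_2/k_1$, and therefore
\[
\card(A'_0)+\card(A\setminus A_1)\;<\;2k_1\sqrt{2\log k_1/t}\;+\;1\;+\;2tk_2/k_1 .
\]
Taking $t\asymp k_1^{4/3}(\log k_1)^{1/3}k_2^{-2/3}$ balances the two main terms and bounds the right side by $\ll(k_1k_2\log k_1)^{1/3}$; the requirement $t\ge 2\log k_1$ reduces to $k_2\log k_1<c_0 k_1^2$ once $c_0$ is chosen small enough, so it is in force.

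The case $S^-(k_1,k_2,C,M)$ is handled identically, with $a^*-a$ in place of $a^*+a$ and $A'_0-A'_0$ in place of $A'_0\hp A'_0$. The sole change is that, for a fixed $c\ne 0$, the representations of $c$ as a difference of two elements of $A$ no longer form disjoint pairs but the edge set of a graph on $A$ of maximum degree $2$ (the edges $\{x,x-c\}$); such a graph contains a matching using at least a constant fraction of its edges, so one replaces $\sigma(c)$ by this matching size, which is at least a constant multiple of $\card\{x\in A:x-c\in A\}$, absorbing the loss into the constants. One uses $\sum_{c\ne 0}\card\{x\in A:x-c\in A\}=k_1^2-k_1$ and $\card(A-A)\le k_2$ exactly as above ($0$ never needs covering, since $a^*-a=0$ is excluded), and the same optimisation of $t$ and $p$ gives the bound.

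The one step requiring genuine care is the (near-)disjointness of the representing pairs of a fixed sum, respectively difference: it is what makes the covering-failure probabilities multiply rather than merely add, and hence what produces the cube-root exponent after $t$ and $p$ are optimised. The remaining ingredients --- selecting $a^*$ by averaging against $\card(A\hp A)\le k_2$, and controlling $\card(A'_0)$ and the covering event simultaneously via two probability-$\tfrac12$ complements --- are routine.
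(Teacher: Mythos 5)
Your proposal is correct and is essentially the paper's own argument: the paper simply invokes the probabilistic covering proof of \cite[Proposition 15]{Gclique} with parameters $Q=[k_1^{4/3}k_2^{-2/3}\log^{1/3}k_1]$ and $q=100\ln^{1/2}k_1/\sqrt{Q}$, which coincide (up to constants) with your popularity threshold $t$ and sampling probability $p$. Your treatment of the difference case via a bounded-degree matching, the averaging choice of $a^*$, and the degenerate regime $k_2\log k_1\gg k_1^2$ all check out.
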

\begin{proof}
The proof follows from the arguments used in the proof
of~\cite[Proposition 15]{Gclique} with the choice of the parameters $Q$ to be 
$[\frac{k_1^{4/3}}{k_2^{2/3}}\log^{1/3}k_1]$ and $q$ to be $100\frac{\ln^{1/2}
  k_1}{\sqrt{Q}}$. In~\cite[Proposition 15]{Gclique} it was assumed that $k_2
\leq k_1^{31/30}$ and the choice of parameters $Q$ and $q$ used
 were $[k_1^{1/5}]$ and $k_1^{-1/15}$
respectively .
\end{proof}
\begin{cor}\label{sa0}
For any $A \in S(k_1,k_2, C,M)$, let $A_0$ be a subset of $A$ as define 
above. Then  we have $\card(A_0)\ll
(k_1k_2\log {k_1})^{1/3}$. Similar statement holds for
any
$A \in S^-(k_1,k_2,C,M)$.
\end{cor}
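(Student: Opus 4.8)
The plan is to read Corollary~\ref{sa0} off the Lemma that immediately precedes it; the only real work is reconciling the set $A_1$ furnished by that Lemma with the set $A_0$ occurring in the definition used by Proposition~\ref{nfc}. Recall that, for $A\in S(k_1,k_2,C,M)$, the set $A_0$ is chosen to be a subset of $A$ of smallest possible cardinality for which there exists $a^*\in A$ with $a^*+(A\setminus\{a^*\})\subseteq A_0\hp A_0$ (the secondary minimisation of $\card(A_0\hp A_0)$ among such sets plays no role in bounding $\card(A_0)$). Hence it suffices to exhibit a single subset of $A$ enjoying this covering property and of cardinality $\ll (k_1k_2\log k_1)^{1/3}$.

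First I would invoke the preceding Lemma, which produces $a^*\in A$, $A'_0\subseteq A$ and $A_1\subseteq A$ with
$$\card(A'_0)+\card(A\setminus A_1)\ll (k_1k_2\log k_1)^{1/3}\qquad\text{and}\qquad a^*+A_1\subseteq A'_0\hp A'_0.$$
The Lemma already guarantees that $A'_0\hp A'_0$ covers the translates $a^*+a$ for $a\in A_1$; the remaining (few) translates $a^*+a$ with $a\in A\setminus A_1$ are then absorbed by hand. Concretely, set
$$\widetilde{A_0}=A'_0\cup(A\setminus A_1)\cup\{a^*\},$$
a subset of $A$ with $\card(\widetilde{A_0})\le \card(A'_0)+\card(A\setminus A_1)+1\ll (k_1k_2\log k_1)^{1/3}$. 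I would then check $a^*+(A\setminus\{a^*\})\subseteq\widetilde{A_0}\hp\widetilde{A_0}$ by cases: if $a\in A_1$ with $a\ne a^*$ then $a^*+a\in A'_0\hp A'_0\subseteq\widetilde{A_0}\hp\widetilde{A_0}$ since $A'_0\subseteq\widetilde{A_0}$; and if $a\in(A\setminus A_1)\cap(A\setminus\{a^*\})$ then $a^*$ and $a$ are two distinct elements of $\widetilde{A_0}$, so $a^*+a\in\widetilde{A_0}\hp\widetilde{A_0}$. Thus $\widetilde{A_0}$ has the defining covering property, and minimality of $\card(A_0)$ gives $\card(A_0)\le\card(\widetilde{A_0})\ll (k_1k_2\log k_1)^{1/3}$. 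For $A\in S^-(k_1,k_2,C,M)$ the argument is verbatim the same, using the second assertion of the Lemma ($a^*-A_1\subseteq A'_0-A'_0$), the same $\widetilde{A_0}$, and the trivial fact that $0=a^*-a^*\in\widetilde{A_0}-\widetilde{A_0}$, so that all of $a^*-A$ is covered.

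I do not expect a genuine obstacle here: the quantitative content — the exponent $1/3$ and the logarithmic factor — is entirely contained in the preceding Lemma, and ultimately in the choice of the parameters $Q$ and $q$ in the proof of~\cite[Proposition 15]{Gclique}. The only point that requires a moment's care is precisely the mismatch between the ``partial cover'' $A_1$ supplied by the Lemma and the ``total cover'' $A\setminus\{a^*\}$ demanded by the definition of $A_0$; this is handled cheaply because $A\setminus A_1$ is small, so adjoining it to the candidate set does not disturb the order of magnitude.
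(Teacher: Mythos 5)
Your proof is correct and is essentially the paper's own argument: the paper likewise forms $A''_0 = A'_0 \cup \{a^*\} \cup (A\setminus A_1)$ from the sets supplied by the preceding lemma, notes the covering property, and concludes by the minimality in the definition of $A_0$. Your explicit case check of the covering property and the remark that the secondary minimisation of $\card(A_0\hp A_0)$ is irrelevant here are just slightly more detailed versions of what the paper leaves implicit.
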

\begin{proof}
For any $A\in S(k_1,k_2,C,M)$, let $A_1, A'_0$ be subsets of $A$ as provided 
by the previous
lemma. We take $A''_0 = A'_0 \cup \{a^*\} \cup (A \setminus A_1)$. Then it follows
that $a^* + (A \setminus \{a^*\}) \subset A''_0 \hat{+} A''_0$ and 
$\card(A''_0) \ll (k_1k_2\log {k_1})^{1/3}$. This proves the claim for any 
$A\in S(k_1,k_2,C,M)$. Similar arguments prove the claim for 
any $A \in S^-(k_1,k_2,C,M)$
\end{proof}
\section{Proof of Theorems~\ref{smallG} and \ref{random-cayley}}\label{pfsmallG}
\begin{proof}[Proof of Theorem~\ref{smallG}]
 Using Proposition~\ref{nfc}, Lemmas~\ref{sa0} and~\ref{dependence} with $F = \zm$ and $M=C=G$, it follows that there exist an absolute constant $c >0$ such that 
the number of Freiman
 isomorphism classes of sets in $S(k_1,k_2,G)$ is at most
$$\min\left(k_1^{c\omega(n)(k_1k_2\log {k_1})^{1/3}}\binom{k_2}{k_1-1}(k_1^3+1),k_1^{4k_1}\right).$$
For obtaining the above estimate we have also used the fact that
$\card(A_0 \hp A_0) \leq k_2$ and since $m$ is the exponent of $G$, we have $\omega(m)=\omega(n).$ Similar arguments shows that the same upper
bound holds for the number of Freiman isomorphism classes of sets in
$S^-(k_1,k_2,G).$
Then the theorem follows using~\eqref{strategy} and Proposition~\ref{1gen} with $C = M=G$.
\end{proof}
\begin{proof}[Proof of Theorem~\ref{random-cayley}]
For any $A \in S^-(k_1,k_2,G)$, let $A_0$ be a subset of $A$ as defined above.
Since $a^* -A \subset A_0 - A_0$, we have $\card(A_0 -A_0) \geq k_1$. 
Moreover from Lemma~\ref{sa0} we have that $\card(A_0) \ll
(k_1k_2\log{k_1})^{1/3}$. Thus if $k_1$ is sufficiently large, then there
exists a subset  $A'$ of $G$ with $A_0 \subset A' \subset A$ such that
we  have $\card(A') \geq \frac{k_1}{100}$ and $\card(A'-A') \geq
100\card(A')$.
Now if $A$ spans a complete subgraph in a random Cayley graph $G_B$ then so
does $A'$. Therefore we obtain
\begin{equation}\label{vrelc}
  \Prob({\it cl(B)} \geq k_1) \leq \sum_{k_1/100 \leq k_1' \leq k_1, k_2' \geq
  100k_1'}\frac{\card(S(k_1',k_2',G))}{2^{(k_2'-1)/2}}.
\end{equation}
 Then using Theorem~\ref{smallG} we verify the
  following inequality.
\begin{equation}\label{refrelc}
\Prob({\it cl(B)} \geq k_1)\leq \sum_{k_1/100 \leq k_1' \leq k_1, k_2' \geq
  100k_1'}2^{-k_2'g(k_1',k_2',n)},
\end{equation}
with $$g(k_1',k_2',n) =
 - \frac{c\omega(n)(k_1'\log{k_1'})^{1/3}\log{k_1'}}{k_2'^{2/3}}
  -\frac{1}{k_2'}\log{\binom{k_2'}{k_1'-1}} - \frac{4\log{k_1'}\log{n}}{k_1'}+ 1/2 -\frac{1}{2k_2'}.$$
Since $k_2' \geq 100k_1'$, using the inequality $\binom{k_2'}{k_1'} \leq
  \left(\frac{ek_2'}{k_1'}\right)^{k_1'}$, it follows that there exist an 
absolute constant
  $c_1$ such that for $k_1' \geq c_1\left(\omega^3(n)\log{\omega(n)} +
  \log{n}\log\log{n}\right)$, then $g(k_1',k_2',n) \geq c_2$, for some absolute
  constant $c_2>0$. Using this and~\eqref{refrelc}, the theorem follows.
\end{proof}
\section{Freiman rank of a set}\label{Freiman}
In this section we prove 
Corollary~\ref{analogous} which was proven by Ben Green in~\cite[Corollary 14]{Gclique} in
the case when $F$ is a field. Although the result is not required for
proving other results of this article, it may be of an independent interest.

\vspace{2mm}
\noindent
{\em Rank of an $F$-module:} For any $F$-module $H$, the rank of $H$ is the
least non negative integer $r(H)$ such that there is a surjective $F$-linear
map from $F^{r(H)}$ to $H$.

\vspace{2mm}
\noindent
{\em Freiman $s$-rank}: Given any finite subset $B$ of a $F$ module
$H$ and a positive integer $s$,
we define
Freiman $s$-rank $r_s(B)$ to be $r\left(Hom_s(B,F)\right) -1$. In case $F$ is a
field and $s=2$, $r_s(B)$ is the Freiman dimension of $B$ as defined by 
Ben Green in~\cite{Gclique}.

\vspace{2mm}
\noindent
We will need the following well known fact.
\begin{lem}\label{dual}
Let $F$ be either equal to $\zm$ or is equal to $\Q$. For any finitely
generated $F$-module $H$,
the dual module $Hom_F(H, F)$ is isomorphic to $H$. 
\end{lem}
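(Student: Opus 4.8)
The plan is to treat the two admissible rings $F$ separately, in each case reducing $H$ to standard building blocks and dualising term by term.

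If $F=\Q$, then, as recalled above, the finitely generated module $H$ is a finite-dimensional $\Q$-vector space, say $H\cong\Q^{d}$, and the classical duality for finite-dimensional vector spaces gives $Hom_{\Q}(H,\Q)\cong Hom_{\Q}(\Q^{d},\Q)\cong\Q^{d}\cong H$ (pick a basis and take the dual basis). Nothing more is needed here.

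If $F=\zm$, then $H$ is a finite abelian group annihilated by $m$, and on such modules an $\zm$-linear map is the same thing as a homomorphism of abelian groups, so $Hom_{\zm}(-,\zm)$ coincides with $Hom_{\Z}(-,\Z/m\Z)$. Writing $H\cong\bigoplus_{i=1}^{k}\Z/d_{i}\Z$ by the structure theorem, with $d_{1}\mid d_{2}\mid\cdots\mid d_{k}$, one notes that $d_{k}$ is the exponent of $H$ and hence divides $m$, so every $d_{i}\mid m$. Since $Hom$ carries finite direct sums in the first variable to finite direct sums, it remains only to observe that for $d\mid m$ a homomorphism $\Z/d\Z\to\Z/m\Z$ is determined by the image of $1$, which ranges over the elements of $\Z/m\Z$ killed by $d$, namely the cyclic group $(m/d)\Z/m\Z$ of order $d$; thus $Hom_{\Z}(\Z/d\Z,\Z/m\Z)\cong\Z/d\Z$, and summing gives $Hom_{\zm}(H,\zm)\cong\bigoplus_{i=1}^{k}\Z/d_{i}\Z\cong H$.

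There is essentially no obstacle: the only two points deserving a line of justification are that the invariant factors $d_{i}$ all divide $m$ (used to identify the $d_{i}$-torsion subgroup of $\Z/m\Z$ with a cyclic group of order exactly $d_{i}$), and the identification of $\zm$-linear maps with abelian-group homomorphisms. The isomorphism obtained is not canonical, but only an abstract isomorphism is claimed, which is all that is used afterwards (e.g.\ in the definition of the Freiman $s$-rank as $r(Hom_{s}(B,F))-1$, via Lemma~\ref{extension}).
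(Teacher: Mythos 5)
Your proof is correct. Note that the paper itself offers no proof of Lemma~\ref{dual}: it is quoted there as a ``well known fact,'' so there is nothing to compare against line by line. Your argument is the standard one and handles both cases properly: for $F=\Q$ the statement is just duality of finite-dimensional vector spaces, and for $F=\zm$ you correctly identify $\zm$-linear maps with group homomorphisms into $\Z/m\Z$, invoke the structure theorem, observe that every invariant factor $d_i$ divides $m$ (the one point that genuinely needs saying, since it is what makes the $d_i$-torsion of $\Z/m\Z$ cyclic of order exactly $d_i$), and dualise the cyclic summands termwise via $Hom_{\Z}(\Z/d\Z,\Z/m\Z)\cong\Z/d\Z$. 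Your closing remark is also apt: the isomorphism is not canonical, but only an abstract isomorphism is used later (in Lemma~\ref{rsetmod}, where $Hom_s(A_{r,s},F)\cong Hom_F(\la A_{r,s}\ra,F)\cong\la A_{r,s}\ra$ is needed only up to isomorphism to compute the rank).
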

\begin{lem}\label{rsetmod}
$r_s(A) = r_s(A_{r,s}) = r(\la A_{r,s}\ra) -1$.
\end{lem}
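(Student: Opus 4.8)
The plan is to establish the two claimed equalities separately. First I would show $r_s(A) = r_s(A_{r,s})$. By Lemma~\ref{Ars} the set $A$ is Freiman $s$-isomorphic to $A_{r,s}$, and composing with a Freiman $s$-isomorphism gives a bijection between $Hom_s(A,F)$ and $Hom_s(A_{r,s},F)$ which is plainly $F$-linear (addition and scalar multiplication are computed pointwise on the target $F$, which is unaffected). Hence these two $F$-modules are isomorphic, so their ranks agree, and subtracting $1$ gives $r_s(A) = r_s(A_{r,s})$.

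For the second equality, the key input is Lemma~\ref{extension} applied with $H = F$: it produces an isomorphism of $F$-modules $Hom_s(A_{r,s},F) \cong Hom_F(\la A_{r,s}\ra, F)$. By Lemma~\ref{dual}, the dual module $Hom_F(\la A_{r,s}\ra, F)$ is isomorphic to $\la A_{r,s}\ra$ itself (here one uses that $\la A_{r,s}\ra$ is finitely generated, being a quotient of $F^{k_1}$). Chaining these isomorphisms gives $Hom_s(A_{r,s},F) \cong \la A_{r,s}\ra$, hence $r(Hom_s(A_{r,s},F)) = r(\la A_{r,s}\ra)$, and subtracting $1$ yields $r_s(A_{r,s}) = r(\la A_{r,s}\ra) - 1$, which combined with the first step completes the proof.

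I do not anticipate a serious obstacle here: the statement is essentially a bookkeeping assembly of Lemmas~\ref{Ars}, \ref{extension}, and \ref{dual}, all of which are available. The one point requiring a little care is the invocation of Lemma~\ref{dual}: one should check that $\la A_{r,s}\ra = F^{k_1}/\la R_s(A)\ra$ is finitely generated, which is immediate since it is a quotient of the free module $F^{k_1}$ of rank $k_1$. A second minor point is to make sure the correspondence of Lemma~\ref{Ars} does indeed induce a module isomorphism on the $Hom_s(-,F)$ spaces rather than just a set bijection, but this is routine since precomposition by a fixed Freiman $s$-isomorphism respects the pointwise $F$-module structure on maps into $F$.
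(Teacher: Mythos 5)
Your proposal is correct and follows essentially the same route as the paper: the first equality from the Freiman $s$-isomorphism of Lemma~\ref{Ars}, and the second by chaining Lemma~\ref{extension} (with $H=F$) with the duality of Lemma~\ref{dual}. The extra checks you flag (finite generation of $\la A_{r,s}\ra$ and $F$-linearity of precomposition) are routine and merely make explicit what the paper leaves implicit.
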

\begin{proof}
Since $A$ and $A_{r,s}$ are Freiman $s$-isomorphic, the first equality
follows.
From Lemma~\ref{extension} the module $Hom_s(A_{r,s},F)$ is isomorphic to the
module $Hom_F(\la A_{r,s}\ra,F)$, which from Lemma~\ref{dual} is isomorphic to
$\la A_{r,s}\ra$. Hence the second equality follows.
\end{proof}
\begin{lem}\label{exponent}
There exists a unique $F$-linear map $\phi_0:\la A_{r,s}\ra\to F$ with $\phi_0(x)
=1_F$ for any $x \in A_{r,s}$. 
In case $F = \zm$, and hence $\la A_{r,s}\ra$ is a finite abelian group, the
order of any element in $A_{r,s}$ is equal to 
$m$.
\end{lem}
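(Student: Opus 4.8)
The plan is to construct $\phi_0$ by descending the ``coordinate-sum'' functional from $F^{k_1}$ to the quotient $F^{k_1}/\la R_s(A)\ra$, which by the proof of Lemma~\ref{extension} is exactly $\la A_{r,s}\ra$, and then to read off the order of each generator $\bar{e_i}$ by evaluating $\phi_0$ on it.

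First I would consider the $F$-linear map $\psi\colon F^{k_1}\to F$ determined by $\psi(e_i)=1_F$ for $1\le i\le k_1$, that is, $\psi(x_1,\dots,x_{k_1})=\sum_i x_i$. For any element $r=e_{i_1}+\dots+e_{i_s}-e_{j_1}-\dots-e_{j_s}\in R_s$ one computes $\psi(r)=s\cdot 1_F-s\cdot 1_F=0$, so $\psi$ vanishes on all of $R_s$, in particular on $R_s(A)$, and hence on the submodule $\la R_s(A)\ra$. Therefore $\psi$ factors through the natural projection $F^{k_1}\to F^{k_1}/\la R_s(A)\ra=\la A_{r,s}\ra$, yielding an $F$-linear map $\phi_0\colon\la A_{r,s}\ra\to F$ with $\phi_0(\bar{e_i})=1_F$ for every $i$. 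Uniqueness is immediate: $\la A_{r,s}\ra$ is generated as an $F$-module by $A_{r,s}=\{\bar{e_1},\dots,\bar{e_{k_1}}\}$, and an $F$-linear map is determined by its values on a generating set.

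For the second assertion, assume $F=\zm$. Since $F^{k_1}$ is annihilated by $m$, so is its quotient $\la A_{r,s}\ra$, whence the order of each $\bar{e_i}$ divides $m$. Conversely, if $d$ is a positive integer with $d\,\bar{e_i}=0$, then applying $\phi_0$ gives $0=\phi_0(d\,\bar{e_i})=d\,\phi_0(\bar{e_i})=d\cdot 1_F$ in $\zm$, i.e.\ $d\equiv 0\m$. Taking $d$ to be the order of $\bar{e_i}$, we conclude that $m$ divides this order, and together with the reverse divisibility the order of $\bar{e_i}$ equals $m$.

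There is no serious obstacle here; the argument is a routine descent. The only points requiring care are the identification $\la A_{r,s}\ra=F^{k_1}/\la R_s(A)\ra$ from Lemma~\ref{extension}, and the observation that the coordinate-sum functional both takes the constant value $1_F$ on $A_{r,s}$ and annihilates $R_s$, so that it passes to the quotient.
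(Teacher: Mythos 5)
Your proof is correct and is essentially the paper's argument with Lemma~\ref{extension} unpacked: the paper observes that the constant map $x\mapsto 1_F$ on $A_{r,s}$ is a Freiman $s$-homomorphism and invokes Lemma~\ref{extension} to extend it uniquely to $\la A_{r,s}\ra$, and the lift used in that lemma's proof is precisely your coordinate-sum functional $\psi$, which you check vanishes on $\la R_s(A)\ra$ before descending. Your order computation in the case $F=\zm$ (applying $\phi_0$ to $d\bar{e_i}=0$ to get $m\mid d$, with $d\mid m$ automatic in a $\zm$-module) coincides with the paper's.
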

\begin{proof}
The constant map  $\phi'_0:A_{r,s} \to F$ with $\phi'_0(x) = 1_F$ for any
$x\in A_{r,s}$ is a Freiman $s$-homomorphism. Therefore using
Lemma~\ref{extension}, there exists a unique $F$-linear map $\phi_0:\la A_{r,s}\ra\to F$
with $\phi_0(x) = 1_F$ for any $x\in A_{r,s}$. This proves the first part of
the lemma. In case $F= \zm$, let $x$ be
any fixed element in $A_{r,s}$ and $d$ be the order of $x$. Since $\phi_0$ is
$F$-linear, it follows that $\phi_0(dx) = d\phi_0(x) = 0$. Since $\phi_0(x) =
1_F$, it follows that $d=m$. 
\end{proof}
\begin{lem}\label{abelian}
Let $H$ be a finitely generated $F$-module. In case $F=\zm$ and hence $H$ is a
finite abelian group, then $H=\oplus_{i=1}^{r}A_i$, where $r=r(H)$ and
$A_i$'s are cyclic groups. Moreover given any element $x_1 \in H$ with order of $x_1$
being equal to the exponent of $H$, there exist $A_i$'s as above with $A_1 =
\la x_1\ra$.
\end{lem}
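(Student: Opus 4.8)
The first assertion is essentially the fundamental theorem of finite abelian groups, and the only point needing comment is that the number of cyclic summands in a decomposition with as few summands as possible coincides with the rank $r(H)$ as defined in this section. The plan is to record the two inequalities. If $H=\oplus_{i=1}^{s}A_i$ with each $A_i$ cyclic, then choosing one generator of each $A_i$ exhibits $H$ as an $F$-quotient of $F^{s}$, so $r(H)\le s$; taking $s$ minimal gives $r(H)\le s_{\min}$. Conversely, for any prime $p$ the quotient $H/pH$ is a $\zp$-vector space that is a quotient of $F^{r(H)}/pF^{r(H)}$, which is isomorphic to $(\zp)^{r(H)}$ when $p\mid m$ and is $0$ otherwise; hence $\dim_{\zp}H/pH\le r(H)$ for every $p$. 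Writing the minimal cyclic decomposition of $H$ in invariant-factor form $\oplus_{i=1}^{s_{\min}}\Z/d_i\Z$ with $d_1\mid d_2\mid\cdots$ and all $d_i>1$, and choosing a prime $p\mid d_1$, one gets $\dim_{\zp}H/pH=s_{\min}$. Therefore $s_{\min}=r(H)$, which proves the first claim and also records the identity $r(H)=\max_p\dim_{\zp}H/pH$, the maximum being attained at any prime dividing the smallest invariant factor.

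For the ``moreover'' part the key step is the standard splitting fact: if $x_1\in H$ has order equal to the exponent $e$ of $H$, then $H=\la x_1\ra\oplus K$ for some subgroup $K$. I would prove this by choosing $K\le H$ of maximal cardinality subject to $K\cap\la x_1\ra=\{0\}$ and showing $K+\la x_1\ra=H$. If not, pick in the nontrivial quotient $H/(K+\la x_1\ra)$ an element $\bar y$ of prime order $p$; since the order of any preimage $y$ divides $e$ we get $p\mid e$. Writing $py=k+tx_1$ with $k\in K$ and multiplying by $e/p$ forces $(e/p)tx_1\in K\cap\la x_1\ra=\{0\}$, so $e\mid(e/p)t$, i.e.\ $p\mid t$; replacing $y$ by $y'=y-(t/p)x_1$ keeps $y'\notin K+\la x_1\ra$ but now $py'\in K$. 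Then $K':=K+\la y'\ra$ strictly contains $K$, and a short computation (reduce the coefficient of $y'$ modulo $p$ using $py'\in K$, then use that a nonzero residue mod $p$ is invertible) shows $K'\cap\la x_1\ra=\{0\}$, contradicting maximality of $K$. Hence $H=\la x_1\ra\oplus K$.

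Finally I would combine these. Apply the first assertion to $K$ to write $K=\oplus_{i=2}^{s}A_i$ with the $A_i$ cyclic and $s-1=r(K)$, so that $H=\la x_1\ra\oplus A_2\oplus\cdots\oplus A_s$ is a decomposition into $s$ cyclic summands with $A_1:=\la x_1\ra$. It remains to check $s=r(H)$, that is $r(H)=r(K)+1$. The inequality $r(H)\le r(K)+1$ is clear. For the reverse, if $K=0$ then $s=1$ and $H=\la x_1\ra$ is cyclic, so $r(H)=1$; otherwise pick a prime $p$ dividing the smallest invariant factor $d_1$ of $K$, and note $p\mid d_1\mid\exp(K)\mid e$, where $e$ is the order of $x_1$. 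Then from $H=\la x_1\ra\oplus K$ one has $\dim_{\zp}H/pH=\dim_{\zp}\la x_1\ra/p\la x_1\ra+\dim_{\zp}K/pK=1+r(K)$, and since $\dim_{\zp}H/pH\le r(H)$ this yields $r(H)\ge r(K)+1$. Hence $s=r(H)$, which finishes the proof. The only real subtlety is pinning down the \emph{exact} number of summands rather than just a one-sided bound; the characterization $r(H)=\max_p\dim_{\zp}H/pH$ from the first step is what makes this clean, while the genuinely hands-on part is the splitting argument in the middle step.
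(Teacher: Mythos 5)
Your proof is correct, but it takes a more self-contained route than the paper. The paper leans on two imported facts: it quotes the invariant-factor form of the structure theorem and simply asserts that the divisibility chain $c_i\mid c_{i-1}$ makes $s$ the least number of cyclic summands (this, combined with a Smith-normal-form argument showing $H$ is a sum of $r$ cyclic groups, gives $s\le r$, while $r\le s$ comes from generators); and for the ``moreover'' part it refers the reader to the proof of Herstein's Theorem 2.14.1 rather than arguing it. You instead prove both ingredients: the identification $r(H)=s_{\min}$ via the characterization $r(H)=\max_p \dim_{\zp} H/pH$ (noting $H/pH$ is a quotient of $(\zp)^{r(H)}$ when $p\mid m$ and vanishes otherwise, and that equality holds at a prime dividing the smallest invariant factor), and the splitting $H=\la x_1\ra\oplus K$ for $x_1$ of maximal order by the standard maximal-complement argument. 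You then glue these by applying the first part to $K$ and checking $r(H)=r(K)+1$ with the same mod-$p$ dimension count at a prime dividing the smallest invariant factor of $K$ (the case $K=0$ being trivial), which correctly pins down that the total number of summands is exactly $r(H)$. What your approach buys is a genuinely complete argument: it supplies proofs of exactly the two points the paper treats as citations or assertions (the minimality of the number of invariant factors, and the splitting off of a cyclic subgroup of maximal order), at the cost of being longer; the paper's version is shorter but defers the splitting step to Herstein and leaves the minimality claim unproved. The only cosmetic difference is the indexing convention for invariant factors (you use $d_1\mid d_2\mid\cdots$, the paper uses $c_i\mid c_{i-1}$), which is immaterial.
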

\begin{proof}
From the structure theorem of finite abelian groups, we have that
$H=\oplus_{i=1}^{s}A_i$, where $s$ is a positive integer and
$A_i$'s are cyclic groups isomorphic to $\Z/c_i\Z$ with $c_i|c_{i-1}$ for all
$2\leq i \leq s$. Moreover going through the proof of~\cite[Theorem
2.14.1]{Herstein} the last claim of the lemma follows. To prove the lemma
we need to show that $s=r$.  A subset of $H$ containing an element $x_i$
from each $A_i$ with $x_i$ being a generator of $A_i$, is of cardinality
$s$ and spans $H$ as an $F$-module. Thus from the definition of the rank of an
$F$-module we have
\begin{equation}\label{rless}
r\leq s.
\end{equation}
Moreover using the definition of a rank of an $F$-module we have a surjective
group homomorphism $f:\Z^r\to H$. Since $\Z^r$ is a free module over the
principle ideal domain $\Z$, we have that $\ker(f)$ is also a free module over
$\Z$. Moreover there exist a basis $\{y_1,\ldots,y_{r}\}$ of $\Z^r$ such
that the basis of $\ker(f)$ is $\{u_1y_1,\ldots,u_{r}y_{r}\}$, where
$u_i$'s are positive integers. Thus $ \Z^r/\ker(f) =
\oplus_{i=1}^{r}\Z/u_i\Z$. Since $H$ is isomorphic to $\Z^r/\ker(f)$ it
follows that $H$ can be written as a direct sum of $r$ cyclic groups.
But we also have that $H$ is isomorphic to $\oplus_{i=1}^{s}\Z/c_i\Z$ with $c_i|c_{i-1}$
for any $i$ which satisfies $2\leq i \leq s$. The condition that $c_i|c_{i-1}$ implies that $s$
is the least positive integer $d$ such that $H$ can be written as a direct sum
of $d$ cyclic groups. Therefore we have
\begin{equation}\label{sless}
s \leq r.
\end{equation}
Combining~\eqref{rless} and \eqref{sless} we have $s=r$. Hence the lemma is proven.
\end{proof}
\begin{lem}\label{X}
There exists a subset $X= \{x_1,\ldots,x_{r}\}$ of $\la A_{r,s}\ra$ of
 cardinality
 $r=r(\la A_{r,s}\ra)$ such that $x_1  \in A_{r,s}$ and $\la X\ra = \la A_{r,s}\ra$.
\end{lem}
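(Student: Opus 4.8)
The plan is to combine Lemma~\ref{exponent} with Lemma~\ref{abelian}, treating the cases $F=\Q$ and $F=\zm$ in parallel. First recall from Lemma~\ref{exponent} that in the case $F=\zm$ every element of $A_{r,s}$ has order exactly $m$, so in particular the exponent of the finite abelian group $\la A_{r,s}\ra$ is $m$ and any fixed $x_1 \in A_{r,s}$ is an element realizing this exponent. (The set $A_{r,s}$ is nonempty since $k_1\geq 1$.) In the case $F=\Q$, the module $\la A_{r,s}\ra$ is a finite-dimensional $\Q$-vector space and $x_1\in A_{r,s}$ is simply a nonzero vector — nonzero because $\phi_0(x_1)=1_F\neq 0$ by Lemma~\ref{exponent}.

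Next I would produce the decomposition. In the case $F=\zm$, apply the last sentence of Lemma~\ref{abelian} with $H=\la A_{r,s}\ra$ and this choice of $x_1$: there exist cyclic subgroups $A_1,\ldots,A_r$ with $r=r(\la A_{r,s}\ra)$, with $\la A_{r,s}\ra=\oplus_{i=1}^r A_i$, and with $A_1=\la x_1\ra$. In the case $F=\Q$, one extends the single nonzero vector $x_1$ to a basis of $\la A_{r,s}\ra$, which has exactly $r=r(\la A_{r,s}\ra)=\dim_\Q \la A_{r,s}\ra$ elements; set $A_1=\la x_1\ra$ and let $A_2,\ldots,A_r$ be the lines spanned by the remaining basis vectors.

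Finally, for $2\leq i\leq r$ choose a generator $x_i$ of $A_i$ (in the $\Q$ case, any nonzero vector spanning the line $A_i$), and put $X=\{x_1,\ldots,x_r\}$. Then $\card(X)=r=r(\la A_{r,s}\ra)$, we have $x_1\in A_{r,s}$ by construction, and since $\la A_{r,s}\ra=\oplus_{i=1}^r A_i$ with each $A_i=\la x_i\ra$, the set $X$ spans $\la A_{r,s}\ra$ as an $F$-module, i.e. $\la X\ra=\la A_{r,s}\ra$. I do not anticipate a serious obstacle here: the only point requiring care is making sure the chosen $x_1$ genuinely has maximal order in the $\zm$ case so that Lemma~\ref{abelian} applies with $A_1=\la x_1\ra$, and this is exactly what Lemma~\ref{exponent} guarantees; the two cases $F=\Q$ and $F=\zm$ must simply be handled side by side.
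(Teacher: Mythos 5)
Your proposal is correct and follows essentially the same route as the paper: the $F=\zm$ case is handled by combining Lemma~\ref{exponent} (every element of $A_{r,s}$ has order $m$, hence realizes the exponent) with the last assertion of Lemma~\ref{abelian} to get $\la A_{r,s}\ra=\oplus_{i=1}^r A_i$ with $A_1=\la x_1\ra$, and the field case by a basis argument. The only cosmetic difference is that in the $\Q$ case the paper selects the whole basis from inside $A_{r,s}$ (a spanning set contains a basis), while you extend the single vector $x_1$ to a basis of $\la A_{r,s}\ra$; both satisfy the lemma, which only requires $x_1\in A_{r,s}$.
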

\begin{proof}
In case $F$ is a field, we have a subset $X$ of $A_{r,s}$ such that $X$ forms
a basis of the vector space $\la A_{r,s}\ra$. Thus the claim follows in this
case. In case $F = \zm$, then from Lemma~\ref{exponent}, the order of any
element in~$A_{r,s}$ is equal to the exponent of $H$. Then using
Lemma~\ref{abelian} we have that $\la A_{r,s}\ra = \oplus_{i=1}^{r}A_i$ with
$A_i=\la x_i\ra$
and $x_1\in A_{r,s}$. Therefore $X= \{x_1,\ldots,x_{r}\}$ is a subset of
$\la A_{r,s}\ra$ satisfying the assertion of the lemma.
\end{proof}
\begin{prop}\label{mrp}
Let $A_{r,s} = \{\bar{e_1},\ldots,\bar{e_{k_1}}\}$ be as above. Then the
rank of the submodule 
$H_A = \la\bar{e_2}-\bar{e_1},\ldots,\bar{e_{k_1}}-\bar{e_1}\ra$ of $\la A_{r,s}\ra$ is
  equal to $r_s(A) = r(\la A_{r,s}\ra)-1.$
\end{prop}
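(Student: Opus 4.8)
The plan is to realise $\langle A_{r,s}\rangle$ as an internal direct sum of $H_A$ with a free $F$-module of rank one, and then simply count ranks; since the identity $r_s(A)=r(\langle A_{r,s}\rangle)-1$ is already Lemma~\ref{rsetmod}, it suffices to prove $r(H_A)=r(\langle A_{r,s}\rangle)-1$.

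First I would note that, because $\bar e_i=\bar e_1+(\bar e_i-\bar e_1)$ and the $\bar e_i$ generate $\langle A_{r,s}\rangle$, in the quotient $Q:=\langle A_{r,s}\rangle/H_A$ every $\bar e_i$ has the same image as $\bar e_1$, so $Q$ is generated by the single element $\bar e_1+H_A$; hence there is a surjective $F$-linear map $\psi\colon F\to Q$ with $\psi(1_F)=\bar e_1+H_A$. Next I would bring in the distinguished functional $\phi_0\colon\langle A_{r,s}\rangle\to F$ of Lemma~\ref{exponent}, which has $\phi_0(\bar e_i)=1_F$ for every $i$; then $\phi_0(\bar e_i-\bar e_1)=0$, so $H_A\subseteq\ker\phi_0$ and $\phi_0$ descends to $\bar\phi_0\colon Q\to F$. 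The composite $\bar\phi_0\circ\psi$ sends $1_F$ to $\phi_0(\bar e_1)=1_F$ and hence is the identity of $F$, so $\psi$ has a left inverse, is therefore injective, and is an isomorphism $F\xrightarrow{\ \sim\ }Q$. Consequently the short exact sequence $0\to H_A\to\langle A_{r,s}\rangle\to Q\to 0$ has free cokernel $Q\cong F$ and splits, giving $\langle A_{r,s}\rangle\cong H_A\oplus F$.

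It then remains to check that adjoining a copy of $F$ raises the rank by exactly one, i.e. $r(H_A\oplus F)=r(H_A)+1$. For $F=\Q$ this is a dimension count. For $F=\zm$ (we may assume $m\geq2$) one uses that $H_A$, being a submodule of the $\zm$-module $\langle A_{r,s}\rangle$, has exponent dividing $m$; writing its invariant-factor decomposition as $\bigoplus_{i=1}^{r'}\Z/c_i\Z$ with $c_1\mid m$, $c_i\mid c_{i-1}$ and each $c_i>1$, the group $H_A\oplus\Z/m\Z$ is already presented in invariant-factor form with exactly one extra term $m\geq c_1$, so by the identification of rank with the number of invariant factors established inside the proof of Lemma~\ref{abelian} its rank is $r'+1=r(H_A)+1$. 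Combining this with the splitting and Lemma~\ref{rsetmod} yields $r(H_A)=r(\langle A_{r,s}\rangle)-1=r_s(A)$. The step that needs care is precisely this last one: rank is not additive over direct sums in general (for instance $\Z/6\Z\cong\Z/2\Z\oplus\Z/3\Z$ has rank $1$), and it is exactly the fact that the adjoined summand $\Z/m\Z$ has order a multiple of the exponent of $H_A$ that makes the two invariant-factor lists concatenate cleanly; everything else — the use of $\phi_0$ and the splitting — is formal.
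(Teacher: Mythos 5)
Your proof is correct, but it takes a genuinely different route from the paper. The paper argues by two inequalities: the bound $r(H_A)\geq r_s(A)$ comes from $A_{r,s}\subset \bar{e_1}+H_A$ together with Lemma~\ref{rsetmod}, while the reverse bound uses Lemma~\ref{X} to produce a generating set $X=\{x_1,\ldots,x_r\}$ of $\la A_{r,s}\ra$ with $x_1=\bar{e_1}$ and $r=r_s(A)+1$, and then uses the functional $\phi_0$ of Lemma~\ref{exponent} to show $A_{r,s}\subset \bar{e_1}+\la x_j-\phi_0(x_j)x_1: j\geq 2\ra$, a module of rank at most $r-1$ containing $H_A$. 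You instead use $\phi_0$ as a retraction: since $H_A\subseteq\ker\phi_0$ and every $\bar{e_i}$ maps to $\bar{e_1}$ modulo $H_A$, the quotient $\la A_{r,s}\ra/H_A$ is cyclic and $\phi_0$ shows it is free of rank one, so the sequence splits and $\la A_{r,s}\ra\cong H_A\oplus F$ (equivalently, $\la A_{r,s}\ra=H_A\oplus F\bar{e_1}$ internally, since $\lambda\bar{e_1}\in H_A$ forces $\lambda=\phi_0(\lambda\bar{e_1})=0$). This bypasses Lemma~\ref{X} entirely, at the cost of having to prove $r(H_A\oplus F)=r(H_A)+1$, which you rightly identify as the delicate point (rank over $\zm$ is not additive, as your $\Z/6\Z$ example shows) and settle correctly by concatenating invariant-factor lists, using that the exponent of $H_A$ divides $m$ and the identification of rank with the number of invariant factors from Lemma~\ref{abelian}. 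The paper's argument avoids any discussion of rank additivity by manipulating generating sets directly, but needs Lemma~\ref{X} (and hence the maximal-order splitting in Lemma~\ref{abelian}); yours is more structural and makes the role of $\phi_0$ as a splitting transparent, while still leaning on Lemma~\ref{abelian} for the rank count. Both arguments ultimately rest on Lemmas~\ref{exponent}, \ref{abelian} and \ref{rsetmod}, so the toolkits overlap, but the organization of the proof is genuinely different.
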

\begin{proof}
Since $A_{r,s}$ is contained in $H_A + \bar{e_1}$ and from Lemma~\ref{rsetmod}
the rank of $\la A_{r,s}\ra$
is equal to $r_s(A) +1$, it follows that 
$r(H_A) \geq r_s(A).$ For proving the lemma we shall show that $H_A$ is
contained in a module $H$ of rank at most $r_s(A)$. Let $X = \{x_1,\ldots,x_{r}\}$
be a subset of $A_{r,s}$ with $x_1 = \bar{e_1}$ and  $r = r_s(A)+1$ as provided by
Lemma~\ref{X}. Since $\la X\ra = \la A_{r,s}\ra$, for any $i$ with $1\leq i \leq k_1$, there exists $\lambda_{j,i} \in F$ such that
\begin{equation}\label{contain}
\bar{e_i} = \sum_{j=1}^{r}\lambda_{j,i}x_j.
\end{equation} 
Let $\phi_0$ be the $F$-linear map as in Lemma~\ref{exponent}. Then
evaluating the value of  the both sides of the above equality for the map $\phi_0$, we obtain that 
$$1_F = \sum_{j=1}^{r}\lambda_{j,i}\phi_0(x_j).$$ 
Moreover since $x_1 = \bar{e_1}$ and thus $\phi_0(x_1) = \phi_0(\bar{e_1}) = 1_F$, it follows that for any $i$, we have $\lambda_{1,i} = 1 - \sum_{j=2}^{r}\phi_0(x_j)$. Using this and~\eqref{contain} it follows that
$A_{r,s} \subset x_1 + H$ where $H$ is the module $\la x_2-\phi_0(x_1)x_1,\ldots,x_{r}-
\phi_0(x_{r})x_1\ra$. Thus $H$ contains $H_A$ and its rank is clearly less than or equal to
$r-1$. Therefore it follows that $r(H_A) \leq r-1 = r_s(A)$. Hence the lemma follows.
\end{proof}
\begin{cor}\label{analogous}
Let $A$ be a finite subset of an $F$-module $H$. Then $r_s(A)$ is the largest integer $d$ such that
$A$ is Freiman $s$-isomorphic to a subset $X$ of a module $H$ of rank $d$ and $X$ is not
contained in a translate of any proper submodule of $H$.
\end{cor}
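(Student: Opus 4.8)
The plan is to deduce the corollary directly from Proposition~\ref{mrp} together with Lemma~\ref{rsetmod} and Lemma~\ref{X}. First I would set up notation: write $d_0 = r_s(A)$ and recall from Lemma~\ref{rsetmod} that $d_0 = r(\la A_{r,s}\ra) - 1$. I would establish the two inequalities separately. For the lower bound direction — that $A$ \emph{is} Freiman $s$-isomorphic to some $X$ of the required type sitting in a module of rank exactly $d_0$ — I take the canonical model $A_{r,s}$ inside $\la A_{r,s}\ra$, and I consider the submodule $H_A = \la \bar{e_2}-\bar{e_1},\ldots,\bar{e_{k_1}}-\bar{e_1}\ra$. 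By Proposition~\ref{mrp}, $r(H_A) = d_0$. The set $X := A_{r,s} - \bar{e_1} = \{0, \bar{e_2}-\bar{e_1},\ldots\}$ lies in $H_A$, is Freiman $s$-isomorphic to $A$ (translation is a Freiman isomorphism, and $A \cong_s A_{r,s}$ by Lemma~\ref{Ars}), spans $H_A$, and so is not contained in any translate of a proper submodule of $H_A$ — if it were contained in $y + H'$ with $H' \subsetneq H_A$, then since $0 \in X$ we would get $X \subset H'$, forcing $H_A = \la X\ra \subseteq H'$, a contradiction. This exhibits $X$ with the stated properties in a module of rank $d_0$, so the largest such $d$ is at least $d_0$.

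For the upper bound direction, suppose $A$ is Freiman $s$-isomorphic to a subset $X$ of a module $H$ of rank $d$, with $X$ not contained in a translate of a proper submodule of $H$. I must show $d \le d_0$. Since $X$ is not contained in any translate of a proper submodule, in particular $\la X - x\ra = H$ for every $x \in X$ (the translate $x + \la X - x\ra$ contains $X$, so $\la X-x\ra$ cannot be proper); pick $x_0 \in X$ and translate so that $X' = X - x_0 \ni 0$ spans $H$. Now $r_s(X) = r_s(A) = d_0$ by Freiman-isomorphism invariance (Lemma~\ref{rsetmod}), and I would argue that $r(H) \le r_s(X) + 1$: apply Lemma~\ref{rsetmod}/Lemma~\ref{extension} to $X$ to see that $\la X_{r,s}\ra$ has rank $d_0 + 1$, and the surjection $\la X_{r,s}\ra \twoheadrightarrow \la X\ra$ (induced by the Freiman $s$-homomorphism $X_{r,s} \to X$, which extends to a linear map by Lemma~\ref{extension}) shows $r(\la X\ra) = r(H) \le d_0 + 1$. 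But the translation-by-$(-x_0)$ argument combined with Proposition~\ref{mrp} applied to $X$ in place of $A$ shows that $H = \la X'\ra$, which is a module generated by $\{x - x_0 : x \in X\}$; this is exactly the analogue of $H_A$, whose rank Proposition~\ref{mrp} identifies as $r_s(X) = d_0$. Hence $d = r(H) \le d_0$.

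Combining the two directions gives $d = d_0 = r_s(A)$ as the maximum, which is the assertion of the corollary. The main obstacle I anticipate is the bookkeeping in the upper-bound direction: one has to be careful that Proposition~\ref{mrp} is stated for the canonical set $A_{r,s}$ built from relations, so to apply it to an arbitrary $X$ one should instead apply it to $X_{r,s}$ (which is Freiman $s$-isomorphic to $X$, hence to $A$) and then transport the rank equality back along the Freiman isomorphism using Lemma~\ref{rsetmod}. The condition ``not contained in a translate of a proper submodule'' is precisely what forces $H$ itself (not merely a submodule) to coincide with the span of the difference set, so that Proposition~\ref{mrp}'s computation of $r(H_A)$ pins down $r(H)$ exactly rather than just bounding it; making this equivalence airtight is the one genuinely delicate point, and everything else is routine manipulation of the lemmas already established.
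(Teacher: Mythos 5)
Your proposal is correct and follows essentially the same route as the paper: the lower bound comes from the translated canonical model $\{0,\bar{e_2}-\bar{e_1},\ldots\}$ spanning $H_A$ together with Proposition~\ref{mrp}, and the upper bound from extending the Freiman isomorphism out of the canonical model to an $F$-linear map (Lemma~\ref{extension}), pushing the difference submodule through, and using the hypothesis that $X$ lies in no translate of a proper submodule to force its image to be all of $H$. One small caution on wording: Proposition~\ref{mrp} identifies the rank of the difference submodule only inside the canonical model $\la X_{r,s}\ra$, and the induced surjection onto $\la X-x_0\ra=H$ yields just the inequality $r(H)\le r_s(A)$ (in a concrete realization the rank can genuinely drop), but that inequality is all your argument needs.
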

\begin{proof}
From Lemma~\ref{rsetmod}, we have $r_s(A) =r_s(A_{r,s})$. Let $B =
\{0,\bar{e_2}-\bar{e_1},\ldots,\bar{e_{k_1}}-\bar{e_1}\}$. Then we have a
Freiman $s$-isomorphism
$f:A_{r,s}\to B$ defined by $f(\bar{e_i}) = \bar{e_i}-\bar{e_1}$. From 
Proposition~\ref{mrp} the rank of the module $\la B\ra =
H_A$  is equal to $r_s(A)$. Moreover we observe that if $B$ is contained in
$H' +x$ for some submodule $H'$ of $H$, then since $B$ contains $0$, it
follows that $x \in H'$ and $H' = H_A= \la B\ra$. In other words $B$ is not contained
in a translate of any proper submodule of $\la B\ra$. This implies that
 $d \geq r_s(A)$. Now using Lemma~\ref{extension} any Freiman $s$-isomorphism
 $f:A_{r,s}\to X$ extends as a $F$-linear map  $\wt{f}:\la A_{r,s}\ra \to
 \la X\ra$. Since $A_{r,s} \subset H_A + \bar{e_1}$, we have that $X \subset
 \wt{f}(H_A) + \wt{f}(\bar{e_1})$. Since the rank of $\wt{f}(H_A)$ is at most the
 rank of $H_A$ which is equal to $r_s(A)$, it follows that any set isomorphic
 to $A$ is contained in a translate of a module of rank at most $r_s(A)$. This
 implies that $d \leq r_s(A)$. Hence $r_s(A) = d$.
\end{proof}
\section{Concluding remarks}
A subset $A$ of an abelian group $G$ is said to be {\em sum-free} if there
is no solution of the equation $x+y =z$ with $x,y,z \in
A$. In~\cite{BGS} it was shown that the problem of obtaining an upper
bound for the number of sum-free
sets in certain types of finite abelian groups is equivalent to obtaining an
upper bound for
\begin{equation}\label{ah}
a(H) = \sum_{k_1,k_2}\frac{\Card(S(k_1,k_2,H))}{2^{k_2}},
\end{equation}
with $H= G/(\Z/m\Z)$, where $m$ is the exponent of $G$.
Using the  upper bound for $\card(S(k_1,k_2,H))$ provided by
Theorem~\ref{smallG} it follows that 
\begin{equation}\label{ubch}
a(H) \leq n^{n^{2/3\log n}},
\end{equation}
where $n$ is the order of $H.$
One could also show that
\begin{equation}\label{lbch}
a(H) \geq \frac{s(H)}{2},
\end{equation}
where $s(H)$ is the number of subgroups of $H.$  
Using Theorem~\ref{smallG}, one may verify that the main contribution in the right hand side of~\eqref{ah} comes from those summands with 
 $(2-\epsilon)k_1 \leq k_2 \leq (2+\epsilon)k_1.$
\begin{center}
{\bf Acknowledgement}
\end{center}~I thank R. Balasubramanian, D.S. Ramana for many helpful discussions and carefully reading the manuscript. I would also like to thank Jean-Marc Deshouillers,
Imre Ruzsa and Gilles Z\'emor for making several useful comments. A part of this work was done when I was a post
doctoral fellow at Harish-Chandra research institute (HRI), Allahabad,
India. I am grateful for the support I received during my stay at HRI.
%\bibliographystyle{plain}
%\bibliography{master}

\begin{flushleft}
Institute of Mathematical Sciences,\\
CIT Campus, Taramani,\\
Chennai-600113,\\
India\\
gyan.jp@gmail.com
\end{flushleft}
\end{document}